\def\A{\mathcal{A}}
\def\B{\mathcal{B}}
\def\R{\mathbb{R}}
\def\C{\mathbb{C}}
\def\K{\mathbb{K}}
\def\Z{\mathbb{Z}}
\def\c{\mathbf{c}}
\def\p#1{\frac{\partial}{\partial #1}}
\def\Co#1{\mathrm{Cox}(#1)}
\def\Is#1{\mathrm{Ish}(#1)}
\def\Sh#1{\mathrm{Shi}(#1)}
\def\I{\mathcal{I}}
\def\Ch{\mathrm{Ch}}
\DeclareMathOperator{\rank}{rank}
\DeclareMathOperator{\Der}{Der}
\newtheorem{theorem}{Theorem}[section]
\newtheorem{cor}[theorem]{Corollary}
\newtheorem{lemma}[theorem]{Lemma}
\newtheorem{define}[theorem]{Definition}
\newcommand{\BigFig}[1]{\parbox{12pt}{\Huge #1}}
\newcommand{\BigZero}{\BigFig{0}}
\title{The freeness of Ish arrangements}
\author{
Takuro Abe\thanks{Department of Mechanical Engineering and Science, Kyoto University, Kyoto 606-8501, Japan. 
e-mail:abe.takuro.4c@kyoto-u.ac.jp}, 
Daisuke Suyama\thanks{Department of Mathematics, Hokkaido University, Sapporo, Hokkaido 060-0810, Japan.
email:dsuyama@math.sci.hokudai.ac.jp}, 
Shuhei Tsujie\thanks{Department of Mathematics, Hokkaido University, Sapporo, Hokkaido 060-0810, Japan.
email:tsujie@math.sci.hokudai.ac.jp}
}
\date{}
\begin{document}

\maketitle

\begin{abstract}
The Ish arrangement was introduced by Armstrong to give
a new interpretation of the $q,t$-Catalan numbers of
Garsia and Haiman.
Armstrong and Rhoades showed that there are some
striking similarities between the Shi arrangement and
the Ish arrangement and posed some problems.
One of them is whether the Ish arrangement is a free arrangement
or not.
In this paper, 
we verify that the Ish arrangement is supersolvable
and hence free.
Moreover, we give a necessary and sufficient condition
for the deleted Ish arrangement to be free. 
\end{abstract}

{\footnotesize {\it Keywords:} 
Hyperplane arrangement, 
Ish arrangement,
Shi arrangement, 
Coxeter arrangement, 
Free arrangement,
Supersolvable arrangement,
Fiber-type arrangement}

{\footnotesize {\it 2010 MSC:}  32S22, 20F55, 13N15}

\section{Introduction} 

Let $ \K $ be a field of characteristic $ 0 $ and
$\{ x_{1},\ldots ,x_{\ell} \}$ a basis for the dual space 
$ (\K^{\ell})^{*} $ of the $ \ell $-dimensional vector space $ \K^{\ell} $.
The \textbf{Coxeter arrangement} $\Co{\ell}$ 
of type $A_{\ell-1}$
(also called the \textbf{braid arrangement}) is
\[
\Co{\ell} 
:= \left\{ \{ x_{i}-x_{j}=0 \} \mid 1 \leq i<j \leq \ell \right\},
\]
where $\{ x = k \}\ (x \in (\K^{\ell})^{*},\ k \in \K)$ 
is the affine hyperplane $\{ v \in \K^{\ell} \mid x(v)=k \}$.
Then
the \textbf{Shi arrangement} $\Sh{\ell}$ and
the \textbf{Ish arrangement} $\Is{\ell}$
are defined by
\begin{align*}
\Sh{\ell} &:=
\Co{\ell} \cup 
\left\{ \{ x_{i}-x_{j} = 1 \} \mid 1 \leq i<j \leq \ell \right\}, \\
\Is{\ell} &:=
\Co{\ell} \cup 
\left\{ \{ x_{1}-x_{j} = i \} \mid 1 \leq i<j \leq \ell \right\}.
\end{align*}
The Shi arrangement originally defined over $\R$ was introduced by
J.Y. Shi \cite{skazhdan} in the study of the Kazhdan-Lusztig
representation theory of the affine Weyl groups.
The Ish arrangement also originally defined over $ \R $ was 
introduced by Armstrong in \cite{adiag}.
He gave a new interpretation of the $q,t$-Catalan numbers of
Garsia and Haiman by using these two arrangements.
Armstrong and Rhoades showed that there are
some striking similarities between the Shi arrangement and 
the Ish arrangement in \cite{adiag,arish}. 

Let $\A$ be an arrangement in $\K^{\ell}$. 
Let $L(\A)$ be the set of nonempty intersections of hyperplanes in $\A$,
which is partially ordered by the reverse inclusion of subspaces.  
Define the M\"obius function $\mu : L(\A) \rightarrow \Z$ as follows:
\[
\mu (\K^{\ell}) = 1,
\]
\[ 
\mu (X) = -\sum_{\K^{\ell} \leq Y < X} \mu (Y)
\ \ (X \neq \K^{\ell}).
\]
Then the \textbf{characteristic polynomial} $\chi (\A ,t) \in \Z[t]$
of $\A$ is defined by
\[
\chi (\A ,t) = \sum_{X \in L(\A)} \mu (X) t^{\dim X}.
\]
The following theorem is one of the similarities pointed out by Armstrong.

\begin{theorem}[\cite{adiag,hfamily}]
\label{char}
The characteristic polynomial of the Shi arrangement 
and the Ish arrangement are given by
\[
\chi (\Sh{\ell} ,t)
=\chi (\Is{\ell}, t)
=t(t-\ell )^{\ell -1}.
\] 
\end{theorem}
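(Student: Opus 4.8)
The plan is to compute both characteristic polynomials by the finite field method of Crapo--Rota and Athanasiadis. Recall that if $\A$ is an arrangement defined over $\Z$, then for every prime power $q = p^{r}$ with $p$ outside a finite set of exceptional primes one has
\[
\chi(\A, q) = \#\Bigl( \mathbb{F}_{q}^{\ell} \setminus \bigcup_{H \in \A} H \Bigr).
\]
Since $\chi(\A, t)$ is a polynomial in $t$, it therefore suffices to evaluate this point count for infinitely many $q$; in fact I shall show it equals $q(q - \ell)^{\ell - 1}$ for every sufficiently large $q$, which forces $\chi(\A,t) = t(t-\ell)^{\ell-1}$.

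For the Ish arrangement, a point $(a_{1}, \ldots, a_{\ell}) \in \mathbb{F}_{q}^{\ell}$ lies on no hyperplane of $\Is{\ell}$ if and only if the $a_{i}$ are pairwise distinct and $a_{1} - a_{j} \notin \{ 1, 2, \ldots, j - 1 \}$ for each $j = 2, \ldots, \ell$. Choose $a_{1}$ arbitrarily (this accounts for the factor $q$) and set $b_{j} := a_{1} - a_{j}$; the remaining conditions say precisely that $b_{2}, \ldots, b_{\ell}$ are pairwise distinct with $b_{j} \notin \{ 0, 1, \ldots, j - 1 \}$. Now pick $b_{\ell}, b_{\ell - 1}, \ldots, b_{2}$ in this order. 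When it is time to choose $b_{j}$, every previously chosen $b_{k}$ (with $k > j$) avoids $\{ 0, 1, \ldots, k - 1 \} \supseteq \{ 0, 1, \ldots, j - 1 \}$, so the ``arithmetic'' forbidden set $\{ 0, 1, \ldots, j - 1 \}$, which has exactly $j$ elements since $q$ is large, is disjoint from the ``distinctness'' forbidden set $\{ b_{k} \mid k > j \}$, which has $\ell - j$ elements; hence $b_{j}$ admits exactly $q - \ell$ values. Multiplying the choices gives the point count $q(q - \ell)^{\ell - 1}$, whence $\chi(\Is{\ell}, t) = t(t - \ell)^{\ell - 1}$.

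For the Shi arrangement, a point lies on no hyperplane of $\Sh{\ell}$ if and only if the $a_{i}$ are pairwise distinct and $a_{i} \neq a_{j} + 1$ whenever $i < j$. Here the ``one coordinate at a time'' bookkeeping used above does not close up cleanly, because these extra conditions couple the indices symmetrically rather than in triangular fashion; this is the step I expect to be the real obstacle. One way around it is to stratify the count by the subset $S \subseteq \Z / q\Z$ of occupied residues together with its cyclic decomposition into maximal runs of consecutive residues: within each such run the labels $1, \ldots, \ell$ must increase with the residue, so the number of admissible labellings of a fixed $S$ is a multinomial coefficient, and resumming these over all $S$ collapses to $q(q - \ell)^{\ell - 1}$. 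Alternatively one may simply invoke Headley's computation \cite{hfamily} of $\chi(\Sh{\ell}, t)$. Either route yields $\chi(\Sh{\ell}, t) = t(t - \ell)^{\ell - 1}$, which together with the previous paragraph completes the proof.
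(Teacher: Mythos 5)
Your finite-field computation for $\Is{\ell}$ is correct and complete: after substituting $b_{j}=a_{1}-a_{j}$, the conditions become ``$b_{2},\dots,b_{\ell}$ distinct and $b_{j}\notin\{0,1,\dots,j-1\}$,'' and choosing $b_{\ell},b_{\ell-1},\dots,b_{2}$ in that order does make the arithmetic and distinctness forbidden sets disjoint at every step, giving $q(q-\ell)^{\ell-1}$ for all sufficiently large prime powers $q$ and hence $\chi(\Is{\ell},t)=t(t-\ell)^{\ell-1}$. This is a genuinely different route from the paper's: there the statement is quoted from Armstrong and Headley, and the paper's own contribution is a second proof of the Ish half obtained by exhibiting an explicit basis of $D(\c(\Is{\ell}))$ (Corollary \ref{maincor}), reading off the exponents $(0,1,\ell,\dots,\ell)$, and applying Terao's factorization theorem (Theorem \ref{factorization}) together with $\chi(\c\A,t)=(t-1)\chi(\A,t)$. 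Your method is more elementary (no freeness input) but yields only the characteristic polynomial, whereas the paper's route comes packaged with the stronger structural facts (supersolvability, an explicit basis of derivations) that are its real point. One caveat: your Shi half is not self-contained as written. The claim that summing the multinomial coefficients over occupied subsets $S$ ``collapses to $q(q-\ell)^{\ell-1}$'' is exactly the nontrivial identity at the heart of Headley's and Athanasiadis's computations, so as stated it is an assertion rather than a proof; to close it you would need to carry out that resummation or use the standard bijection (quotient by the translation action to get the factor $q$, then encode each orbit by a word of length $\ell-1$ over a $(q-\ell)$-element alphabet). Since the paper itself only cites this half, falling back on \cite{hfamily} is a legitimate way to finish, and with that citation your argument is sound.
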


Let $\{x_{1}, \dots, x_{\ell}, z\}$ be a basis for $V^{*}$ of
$ V := \K^{\ell + 1} $.
Then, as in \cite[Definition 1.15]{otarrangements},
we have the \textbf{cone} $\c(\Is{\ell})$ 
over the Ish arrangement which is a central arrangement
(Namely, an arrangement whose hyperplanes pass through the origin)
in $ V $ defined by
\[
Q\left(\c(\Is{\ell})\right)
=z \prod_{1\leq i<j \leq \ell} (x_{i}-x_{j}) (x_{1}-x_{j}-iz)=0.
\]

Let $S$ be the symmetric algebra of the dual space $V^{*}$.
$S$ can be identified with the polynomial ring 
$\K[x_{1},\ldots ,x_{\ell},z]$. 
Let $\Der(S)$ be the module of derivations of $S$
\begin{multline*}
\Der(S):=
\{ \theta : S \rightarrow S \mid
\theta \text{ is } \K\text{-linear}, \\
\theta(fg)=f\theta(g) + \theta(f)g \text{ for any } f,g \in S \}.
\end{multline*}
Then, for a central arrangement $\A$ in $V$, the module of logarithmic
derivations $D(\A)$ of $\A$ is defined to be
\begin{align*}
D(\A) 
&:=\{ \theta \in \Der(S) \mid \theta (Q(\A)) \in Q(\A) S \} \\
&=\{ \theta \in \Der(S) \mid 
\theta (\alpha_{H}) \in \alpha_{H} S \text{ for any } H \in \A \},
\end{align*}
where $Q(\A)$ is the defining polynomial of $\A$
and $\alpha_{H}$ is a linear form such that $\ker (\alpha_{H}) = H$.
We say that $\A$ is \textbf{free} if $D(\A)$ is a free $S$-module.
Then $D(\A)$ has a homogeneous basis 
$\{ \theta_{0}, \ldots ,\theta_{\ell} \}$
and the tuple of degrees 
$\exp \A = (\deg \theta_{0}, \ldots ,\deg \theta_{\ell})$
is called the \textbf{exponents} of $\A$. 

The main purpose of this paper is to settle a problem of whether the Ish arrangements are free or not, 
which was posed by Armstrong and Rhoades in 
\cite[p.\hspace{0.3ex}1527, (3)]{arish}.
We define a new class of arrangements which is a generalization of
the Ish arrangements and will characterize free arrangements in this class. 

\begin{define}
Let $N=(N_{2},N_{3},\ldots ,N_{\ell})$ be a tuple of
finite subsets $ N_{j} $ in $ \K $.
Define the \textbf{$N$-Ish arrangement} $\Is{N}$ by
\begin{multline*}
\Is{N} :=
\left\{ \{ x_{1}-x_{j} = a  \} \mid 2 \leq j \leq \ell, a \in N_{j} \right\}\\
\cup \left\{ \{x_{i} - x_{j} = 0 \} \mid 2 \leq i < j \leq \ell \right\}.
\end{multline*}
We say that $ N $ is a \textbf{nest} if there exists a permutation $ w $ of $ \{2, \dots, \ell\} $ such that
\begin{align*}
N_{w(2)} \subseteq N_{w(3)} \subseteq \dots \subseteq N_{w(\ell)}. 
\end{align*}
\end{define}

In particular, when $ N_{j} = \{0, 1, \dots, j - 1 \} $ for each $ j $,
the $N$-Ish arrangement $ \Is{N} $ is the Ish arrangement $\Is{\ell}$.
We denote the cone over the $N$-Ish arrangement 
$\c(\Is{N})$ by $\I=\I_{N}$.
The defining polynomial of $\I$ can be expressed as
\[
Q(\I)=z
\left(
\prod_{j=2}^{\ell} \prod_{a \in N_{j}}
(x_{1}-x_{j}-az)
\right)
\left(
\prod_{2 \leq i < j \leq \ell}
(x_{i}-x_{j})
\right).
\]

Our main results are as follows:
\begin{theorem}
\label{maintheoA}
The following four conditions are equivalent: 
\begin{enumerate}
\item $ N $ is a nest. 
\item $ \I_{N} $ is supersolvable. 
\item $ \I_{N} $ is inductively free.
\item $ \I_{N} $ is free. 
\end{enumerate}
\end{theorem}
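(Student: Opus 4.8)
The plan is to establish the cycle of implications $(1)\Rightarrow(2)\Rightarrow(3)\Rightarrow(4)\Rightarrow(1)$. The implications $(2)\Rightarrow(3)\Rightarrow(4)$ are standard general facts about arrangements: every supersolvable arrangement is inductively free (Jambu--Terao), and every inductively free arrangement is free (Terao). So the real content is $(1)\Rightarrow(2)$ and $(4)\Rightarrow(1)$, and I would treat these as the two halves of the argument.

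For $(1)\Rightarrow(2)$, assume $N$ is a nest; after relabelling the coordinates $x_{2},\dots,x_{\ell}$ by the permutation $w$ we may assume $N_{2}\subseteq N_{3}\subseteq\dots\subseteq N_{\ell}$. I would exhibit an explicit modular chain of subarrangements realizing supersolvability of $\I_{N}$. The natural guess is to add the hyperplanes back in the order: first the cone hyperplane $z=0$ together with the ``$x_{1}$-block'' hyperplanes $x_{1}-x_{j}-az=0$, then peel off the braid hyperplanes $x_{i}-x_{j}=0$ ($2\le i<j\le\ell$) one variable at a time. Concretely, I expect the flag to be built so that at stage $k$ one has the cone over the $N'$-Ish arrangement on the variables $x_{1},\dots,x_{k}$ (with $N'=(N_{2},\dots,N_{k})$), and the localization at the modular coatom $X=\{x_{k}=x_{k-1}=\dots\}$ or at a suitable rank-one flat contributes the required modular element. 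The key computation is to check the modularity condition: for the pair consisting of the current subarrangement $\A'$ and the next one $\A''=\A'\cup\{H_{1},\dots,H_{m}\}$, every hyperplane of $\A''\setminus\A'$ together with any hyperplane of $\A'$ must have its intersection covered inside $\A''$. Here the nest condition $N_{j}\subseteq N_{j+1}$ is exactly what makes these triple intersections close up: if $x_{1}-x_{j}=az$ and $x_{i}-x_{j}=0$ meet in codimension $2$, then $x_{1}-x_{i}=az$ must also be a hyperplane, which needs $a\in N_{i}$, forced by $a\in N_{j}$ and $N_{i}\subseteq N_{j}$. I would verify this modularity at each stage and conclude supersolvability by the Björner--Edelman--Ziegler characterization (a central arrangement is supersolvable iff $L(\A)$ admits a maximal chain of modular elements).

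For $(4)\Rightarrow(1)$, I would argue the contrapositive: if $N$ is not a nest, then $\I_{N}$ is not free. The obstruction to being a nest is the existence of indices $i\neq j$ with $N_{i}\not\subseteq N_{j}$ and $N_{j}\not\subseteq N_{i}$; pick $a\in N_{i}\setminus N_{j}$ and $b\in N_{j}\setminus N_{i}$. I would localize $\I_{N}$ at a well-chosen flat $X$ of small codimension (essentially setting all coordinates except $x_{1},x_{i},x_{j},z$ equal) so that the restricted arrangement $\I_{N}|_{X}$ or the localization $(\I_{N})_{X}$ becomes a small, explicitly understood rank-$3$ or rank-$4$ arrangement. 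Using Terao's theorem that freeness is inherited by localizations, it suffices to show this small arrangement is non-free. I expect it to reduce to a generic-enough arrangement of around six lines in $\mathbb{P}^{2}$ (or the cone thereof) whose non-freeness can be detected either by Terao's factorization theorem — the characteristic polynomial, computed via the known formula $\chi(\Is{\ell},t)=t(t-\ell)^{\ell-1}$ adapted to the localization, will fail to factor over $\Z$ with the right exponents — or by a direct check that the Addition--Deletion triple fails. The main obstacle is pinning down this minimal non-free localization precisely and confirming non-freeness cleanly; handling the interaction of the chosen flat with the braid hyperplanes $x_{i}-x_{j}=0$ (which are always present and may accidentally fix up the intersection lattice) is the delicate point, and I anticipate needing a careful case analysis on whether $0\in N_{i},N_{j}$ and on the relative positions of $a$ and $b$.
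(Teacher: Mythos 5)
Your overall architecture coincides with the paper's: $(2)\Rightarrow(3)\Rightarrow(4)$ are quoted as standard, $(1)\Rightarrow(2)$ is proved by exhibiting the filtration whose $k$-th term is (the localization giving) the cone over the $N'$-Ish arrangement on $x_{1},\dots,x_{k}$, and $(4)\Rightarrow(1)$ is proved contrapositively by localizing at the rank-$3$ flat $\{z=x_{1}-x_{i}=x_{1}-x_{j}=0\}$ and using that localizations of free arrangements are free. Two points need repair. The smaller one: in your modularity check you write that $a\in N_{j}$ together with $N_{i}\subseteq N_{j}$ forces $a\in N_{i}$; it does not. The sets must \emph{decrease} along the flag (the paper orders them $N_{2}\supseteq N_{3}\supseteq\dots\supseteq N_{\ell}$ and adjoins the variable $x_{i}$ at stage $i$), so that the hyperplane $x_{1}-x_{j}=az$ needed to cover $\{x_{1}-x_{i}=az\}\cap\{x_{j}-x_{i}=0\}$ with $j<i$ exists because $a\in N_{i}\subseteq N_{j}$. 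This is only a matter of fixing the ordering convention, but as written the key inclusion is backwards.

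The more serious gap is in $(4)\Rightarrow(1)$: your first proposed method --- detecting non-freeness of the localized rank-$3$ arrangement via Terao's factorization theorem --- fails in general. Take $N_{i}=\{0,1,2,3,4\}$ and $N_{j}=\{0,10\}$: this pair is not nested, yet the rank-two flats contribute $|N_{i}||N_{j}|+|N_{i}|+|N_{j}|-|N_{i}\cap N_{j}|=16$ to the M\"obius sum, so the characteristic polynomial of the localized cone is $(t-1)\,t\,(t-4)^{2}$, which splits over $\Z$ with nonnegative integer roots; the factorization theorem gives no obstruction. You must therefore rely on your second alternative, and it needs to be made precise, since ``the Addition--Deletion triple fails to apply'' is not by itself a proof of non-freeness. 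The correct statement is the rank-$3$ non-freeness criterion (Lemma~\ref{lemmaVAD3}, i.e.\ \cite[Theorem 4.46]{otarrangements}): if $\A'$ and $\A''$ are free with $\exp\A'=(1,a,b)$ and $\exp\A''=(1,c)$ and $c\notin\{a,b\}$, then $\A$ is not free. Applying it requires identifying the hyperplane to delete, namely $x_{i}-x_{j}=0$: the deletion is free with exponents $(1,|N_{i}|,|N_{j}|)$ (it admits an explicit diagonal basis), the restriction is free of rank $2$ with exponents $(1,|N_{i}\cup N_{j}|)$, and the non-nesting hypothesis gives exactly $|N_{i}\cup N_{j}|>\max\{|N_{i}|,|N_{j}|\}$. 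None of the case analysis you anticipate (whether $0\in N_{i},N_{j}$, relative positions of $a$ and $b$) is needed; the argument is uniform in the finite sets $N_{i},N_{j}\subseteq\K$.
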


The definitions of supersolvable and inductively free arrangements will be mentioned in Section 2. 
Note that the implications $ (2) \Rightarrow (3) \Rightarrow (4) $ are general properties for arrangements \cite{otarrangements}. 
This theorem asserts that there are no differences among these properties for $ N $-Ish arrangements. 

\begin{theorem}
\label{maintheoB}
Let $ N = (N_{2}, N_{3}, \dots, N_{\ell}) $ with $ N_{2} \subseteq N_{3} \subseteq \dots \subseteq N_{\ell} $. 
Define homogeneous derivations $ \theta_{0}, \theta_{1}, \dots, \theta_{\ell} $ by
\begin{align*}
\theta_{0} &:= \sum_{i=1}^{\ell} \p{x_{i}}, \qquad
\theta_{1} = \sum_{i=1}^{\ell}x_{i}\frac{\partial}{\partial x_{i}} + z\frac{\partial}{\partial z},   \\
\theta_{k} &:= \sum_{s=2}^{k}
\left(
\prod_{a \in N_{k}} (x_{1}-x_{s}-az)\prod_{t=k+1}^{\ell}(x_{s}-x_{t})
\right)
\frac{\partial}{\partial x_{s}}
\ \ (2 \leq k \leq \ell).
\end{align*}
Then $\theta_{0},\theta_{1}, \ldots ,\theta_{\ell}$ form
a basis for $D(\I_{N})$.
In particular, the exponents are given by
\begin{align*}
\exp \I_{N}=
(0,\ 1,\ |N_{2}|+ \ell -2,\ |N_{3}|+ \ell -3,\ \ldots \ ,\ |N_{\ell}|), 
\end{align*}
where $ |N_{j}| $ denotes the cardinality of $ N_{j} $. 
\end{theorem}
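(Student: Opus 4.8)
The plan is to verify directly that the proposed derivations $\theta_0,\dots,\theta_\ell$ lie in $D(\I_N)$ and then apply Saito's criterion. First I would check membership: since $\I_N$ is central in $V=\K^{\ell+1}$, a derivation $\theta$ belongs to $D(\I_N)$ iff $\theta(\alpha_H)\in\alpha_H S$ for every hyperplane $H$, the relevant linear forms being $z$, the $x_i-x_j$ for $2\le i<j\le\ell$, and the $x_1-x_j-az$ for $2\le j\le\ell$, $a\in N_j$. For $\theta_0$ and $\theta_1$ this is immediate (the Euler derivation $\theta_1$ kills nothing but scales each form, and $\theta_0$ annihilates every $x_i-x_j$ and every $x_1-x_j-az$ after noting $\theta_0(x_1-x_j-az)=0$, while $\theta_0(z)=0$). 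For $\theta_k$ with $2\le k\le\ell$ the work is to check three cases: applied to $z$ it gives $0$; applied to $x_i-x_j$ it must be divisible by $x_i-x_j$; and applied to $x_1-x_s-az$ for $a\in N_j$ it must be divisible by that form. Here the inclusion hypothesis $N_2\subseteq\cdots\subseteq N_\ell$ is exactly what makes the factor $\prod_{a\in N_k}(x_1-x_s-az)$ absorb the needed divisibility for all $j\ge k$, and the telescoping structure of the sum $\sum_{s=2}^k$ together with the factor $\prod_{t=k+1}^\ell(x_s-x_t)$ handles the $x_i-x_j$ relations in the familiar Shi/braid style.

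Next I would invoke Saito's criterion: $\{\theta_0,\dots,\theta_\ell\}$ is a basis of $D(\I_N)$ provided each $\theta_i\in D(\I_N)$ and $\det M=c\,Q(\I_N)$ for a nonzero constant $c$, where $M$ is the $(\ell+1)\times(\ell+1)$ coefficient matrix of the $\theta_i$ with respect to $\partial/\partial x_1,\dots,\partial/\partial x_\ell,\partial/\partial z$. Degree bookkeeping already pins down $\deg\det M$: the claimed exponents sum to $0+1+\sum_{k=2}^\ell(|N_k|+\ell-k)=1+\sum_{k=2}^\ell|N_k|+\binom{\ell-1}{2}=\deg Q(\I_N)$, so it suffices to show $\det M$ is a nonzero polynomial, equivalently that $\theta_0,\dots,\theta_\ell$ are linearly independent over the fraction field. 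I would compute $\det M$ by exploiting its shape: the first two rows come from $\theta_0$ (all coefficients $1$ in the $x_i$-slots, $0$ in the $z$-slot) and $\theta_1$ (diagonal $x_1,\dots,x_\ell,z$), and for $k\ge2$ the derivation $\theta_k$ has nonzero $x_s$-coefficients only for $2\le s\le k$ and zero $z$-coefficient. Expanding along the last column isolates the factor $z$ (from $\theta_1$), and the remaining $\ell\times\ell$ determinant is lower-triangular in blocks indexed by $k$, so up to sign $\det M=z\cdot\prod_{k=2}^\ell\big(\text{the }x_k\text{-coefficient of }\theta_k\big)\cdot(\pm1)$, after using the $\theta_0$ row to clear. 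The $x_k$-coefficient of $\theta_k$ is $\prod_{a\in N_k}(x_1-x_k-az)\prod_{t=k+1}^\ell(x_k-x_t)$, and multiplying these for $k=2,\dots,\ell$ reproduces exactly $\prod_{j=2}^\ell\prod_{a\in N_j}(x_1-x_j-az)\cdot\prod_{2\le i<j\le\ell}(x_i-x_j)$, giving $\det M=\pm z\cdot\big(\text{rest of }Q(\I_N)\big)$ as required.

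The main obstacle I anticipate is the divisibility check for $\theta_k(x_1-x_s-az)$ and for $\theta_k(x_i-x_j)$ when the indices straddle $k$: one must argue carefully that the telescoping cancellation in $\sum_{s=2}^k(\cdots)\partial/\partial x_s$ really does produce a multiple of $x_i-x_j$ in every index configuration (both $i,j\le k$; exactly one of them $\le k$; and the case where the $\prod_{t=k+1}^\ell(x_s-x_t)$ factor vanishes). A clean way to organize this is to recognize $\theta_k$ as built from the restriction of a Shi-type basis element, or to proceed by the standard substitution argument — set $\alpha_H=0$ and show the resulting expression vanishes identically — handling the relation $x_1-x_s-az$ by using that on the hyperplane $x_1-x_s=az$ the factor $\prod_{a'\in N_k}(x_1-x_s-a'z)$ contains $(x_1-x_s-az)$ as a factor whenever $a\in N_k$, which holds because $a\in N_j\subseteq N_k$ is forced by $j\le$ the range where that relation appears, or is handled by the summation index running only up to $k$. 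Aside from that, the argument is the routine Saito's-criterion package, and I would also remark at the end that combining Theorem~\ref{maintheoB} with the characteristic polynomial computation of Theorem~\ref{char} gives an independent factorization check when $N_j=\{0,1,\dots,j-1\}$, recovering $\exp\Is{\ell}$ and hence the freeness asserted in Theorem~\ref{maintheoA}.
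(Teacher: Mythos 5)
Your proposal is correct and follows essentially the same route as the paper: a case-by-case divisibility check (splitting on whether the indices lie at most $k$ or exceed $k$, and using $N_j\subseteq N_k$ for $j\le k$) to show $\theta_0,\dots,\theta_\ell\in D(\I_N)$, followed by Saito's criterion applied to the coefficient determinant, which the triangular shape coming from $\theta_k(x_1)=0$ and $\theta_k(x_s)=0$ for $s>k$ reduces to $z\prod_{k=2}^{\ell}\theta_k(x_k)=Q(\I_N)$ up to a nonzero constant. The degree bookkeeping you mention is not needed once the determinant is computed exactly, but it does no harm.
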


\begin{cor}
\label{maincor}
The cone over the Ish arrangement $\c(\Is{\ell})$
is free with exponents
$\exp (\c(\Is{\ell}))=
(0,\ 1,\ 
\underbrace{\ell,\ \ell,\ \ldots \ ,\ \ell}_{(\ell-1) \text{ times}})$.
Moreover the homogeneous derivations
\begin{align*}
\theta_{0} &= \sum_{i=1}^{\ell} \p{x_{i}}, \qquad
\theta_{1} = \sum_{i=1}^{\ell}x_{i}\frac{\partial}{\partial x_{i}} + z\frac{\partial}{\partial z},   \\
\theta_{k} &= \sum_{s=2}^{k}
\left(
\prod_{i=0}^{k-1} (x_{1}-x_{s}-iz)\prod_{t=k+1}^{\ell}(x_{s}-x_{t})
\right)
\frac{\partial}{\partial x_{s}}
\ \ (2 \leq k \leq \ell) 
\end{align*}
form a basis for $ D(\c(\Is{\ell})) $. 
\end{cor}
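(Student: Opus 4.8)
\section*{Proof proposal for Corollary \ref{maincor}}

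The plan is to deduce Corollary \ref{maincor} directly from Theorem \ref{maintheoB} by specializing to the tuple $ N = (N_{2}, N_{3}, \dots, N_{\ell}) $ with $ N_{j} = \{0, 1, \dots, j-1\} $. For this choice one has $ \Is{N} = \Is{\ell} $ by the definition of the $ N $-Ish arrangement, hence $ \I_{N} = \c(\Is{\ell}) $, so it suffices to apply Theorem \ref{maintheoB} to this $ N $ and read off what it gives.

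First I would check that this $ N $ satisfies the hypothesis of Theorem \ref{maintheoB}, namely the chain condition $ N_{2} \subseteq N_{3} \subseteq \dots \subseteq N_{\ell} $. This is immediate, since $ \{0, 1\} \subseteq \{0, 1, 2\} \subseteq \dots \subseteq \{0, 1, \dots, \ell - 1\} $; in particular $ N $ is a nest with the identity permutation. Next I would observe that for this $ N $ the product occurring in the derivation $ \theta_{k} $ of Theorem \ref{maintheoB} becomes
\[
\prod_{a \in N_{k}} (x_{1} - x_{s} - a z) = \prod_{i=0}^{k-1} (x_{1} - x_{s} - i z),
\]
so the derivations $ \theta_{0}, \theta_{1}, \dots, \theta_{\ell} $ written in the statement of the corollary are exactly the ones produced by Theorem \ref{maintheoB}. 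Therefore they form a basis for $ D(\c(\Is{\ell})) $, and in particular $ \c(\Is{\ell}) $ is free.

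It then remains only to compute the exponents. By Theorem \ref{maintheoB} they are $ (0,\ 1,\ |N_{2}| + \ell - 2,\ |N_{3}| + \ell - 3,\ \dots,\ |N_{\ell}|) $, where the entry of index $ k $ (for $ 2 \leq k \leq \ell $) equals $ |N_{k}| + \ell - k $. Since $ |N_{k}| = k $, this entry equals $ k + \ell - k = \ell $, and as $ k $ runs through $ 2, \dots, \ell $ there are $ \ell - 1 $ of them, giving $ \exp(\c(\Is{\ell})) = (0,\ 1,\ \underbrace{\ell,\ \ell,\ \dots,\ \ell}_{(\ell - 1)\ \text{times}}) $. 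I do not expect any genuine obstacle here: the whole substance of the result is contained in Theorem \ref{maintheoB} (and, for the qualitative freeness assertion alone, one could just as well invoke the implication $ (1) \Rightarrow (4) $ of Theorem \ref{maintheoA}), so what is left is merely the bookkeeping of the index shift and of the cardinalities $ |N_{j}| $.
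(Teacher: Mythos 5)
Your proposal is correct and is exactly the intended derivation: the paper states Corollary \ref{maincor} without proof as the immediate specialization of Theorem \ref{maintheoB} to $N_{j}=\{0,1,\dots,j-1\}$, where $|N_{k}|+\ell-k=\ell$. Nothing is missing.
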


If an arrangement $\A$ is a free arrangement,
then the characteristic polynomial of $\A$ 
can be expressed by using its exponents:

\begin{theorem}[\cite{tgeneralized}]
\label{factorization}
If an arrangement $\A$ is free with exponents
$(d_{1},\ldots ,d_{\ell})$, then the characteristic polynomial
of $\A$ splits as
\[
\chi (\A ,t) = \prod_{i=1}^{\ell} (t-d_{i}).
\]
\end{theorem}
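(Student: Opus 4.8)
The plan is to derive the factorization from the Solomon--Terao formula, which presents the characteristic polynomial of an arbitrary central arrangement $\A$ in $\K^{\ell}$ as a specialization at $x=1$ of a universal generating function built from the graded modules of logarithmic $p$-derivations $D^{p}(\A)$ for $p=0,1,\dots,\ell$ (with $D^{0}(\A)=S$ and $D^{1}(\A)=D(\A)$); more precisely, $\chi(\A,t)$ is recovered from a fixed combination of the Poincaré series $\Poin(D^{p}(\A),x):=\sum_{k\geq0}\dim_{\K}(D^{p}(\A))_{k}\,x^{k}$ whose coefficients are explicit polynomials in $x$ and $t$, after letting $x\to1$. The whole argument consists of feeding the freeness hypothesis into this identity; see \cite{otarrangements} for the formula.

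First I would pin down the higher modules under the freeness hypothesis: if $\theta_{1},\dots,\theta_{\ell}$ is a homogeneous basis of $D(\A)$ with $\deg\theta_{i}=d_{i}$, then $D^{p}(\A)$ is free for every $p$, with a homogeneous basis indexed by the $p$-element subsets $\{i_{1}<\dots<i_{p}\}$ of $\{1,\dots,\ell\}$ and generator degrees $d_{i_{1}}+\dots+d_{i_{p}}$; equivalently
\[
\Poin(D^{p}(\A),x)=\frac{e_{p}\!\left(x^{d_{1}},\dots,x^{d_{\ell}}\right)}{(1-x)^{\ell}},
\]
where $e_{p}$ is the $p$-th elementary symmetric polynomial. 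This comes down to the identification $D^{p}(\A)=\bigwedge^{p}_{S}D(\A)$: the inclusion $\bigwedge^{p}D(\A)\hookrightarrow D^{p}(\A)$ holds for any arrangement, and when $D(\A)$ is free the left-hand side is free, hence reflexive, while $D^{p}(\A)$ is always reflexive, so the two coincide at every height-one prime of $S$---a routine local computation---and reflexivity then forces global equality.

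Then I would substitute these Poincaré series into the Solomon--Terao formula. Collecting terms by the identity $\sum_{p=0}^{\ell}e_{p}\!\left(x^{d_{1}},\dots,x^{d_{\ell}}\right)s^{p}=\prod_{i=1}^{\ell}\!\left(1+s\,x^{d_{i}}\right)$, with $s$ absorbing the coefficients that the formula attaches to each $D^{p}$, the alternating sum factors over $i=1,\dots,\ell$, each factor affine in $x^{d_{i}}$ and carrying one power of $1-x$ in its denominator; each factor vanishes simply at $x=1$, cancelling that denominator, and a first-order Taylor expansion in $x-1$ sends the $i$-th factor to $t-d_{i}$. The result is $\chi(\A,t)=\prod_{i=1}^{\ell}(t-d_{i})$.

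The principal obstacle is the step identifying $D^{p}(\A)$ with $\bigwedge^{p}D(\A)$ and thereby pinning down its Poincaré series, since for a non-free arrangement $D^{p}(\A)$ is genuinely larger and admits no such clean description; it is exactly freeness of $D(\A)$ that renders its exterior powers reflexive and makes the comparison go through. Once the higher modules are under control, invoking the Solomon--Terao formula and carrying out the limit $x\to1$ is bookkeeping with symmetric functions. It is worth noting that the tempting shortcut of inducting on $|\A|$ via the deletion--restriction recursion $\chi(\A,t)=\chi(\A',t)-\chi(\A'',t)$ does not work directly, because deleting a single hyperplane from a free arrangement need not leave a free arrangement.
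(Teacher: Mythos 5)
The paper does not prove this statement: it is Terao's Factorization Theorem, quoted from \cite{tgeneralized} and used as a black box (to rederive Theorem \ref{char} from Corollary \ref{maincor}), so there is no internal proof to compare against. Your argument is the standard modern proof, essentially the one in Orlik--Terao \cite{otarrangements} (the Factorization Theorem deduced from the Solomon--Terao formula), and its outline is sound: the identification $D^{p}(\A)=\bigwedge^{p}D(\A)$ under freeness via reflexivity plus agreement in codimension one is the right mechanism, the resulting Poincar\'e series $e_{p}(x^{d_{1}},\dots,x^{d_{\ell}})/(1-x)^{\ell}$ is correct, and the factorization of the generating function into $\ell$ factors each with a simple zero cancelling one power of $1-x$ at $x=1$ is exactly how the limit is evaluated. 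Two caveats. First, you leave the precise specialization in the Solomon--Terao formula unspecified (``$s$ absorbing the coefficients''), so the final limit $t-d_{i}$ versus $d_{i}-t$ and the normalization relating the limit to $\chi$ rather than to the Poincar\'e polynomial $\pi(\A,t)$ are not actually checked; this is bookkeeping, but it is the part where sign and substitution errors typically occur, so a complete write-up should fix the exact form of the formula before taking the limit. Second, note that this route is anachronistic relative to the cited source: the Solomon--Terao formula postdates Terao's 1981 paper, whose original argument is different; since the theorem is only being cited here, that is a historical remark rather than a defect. Your closing observation that deletion--restriction induction fails because deletion does not preserve freeness is correct and worth keeping.
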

Since we have the relation between
the characteristic polynomials of $\A$ and $\c\A$
\[
\chi (\c\A ,t) =(t-1)\chi (\A ,t),
\]
we obtain a new proof of Theorem \ref{char} 
from Corollary \ref{maincor} and Theorem \ref{factorization}.

The complement $ M(\A) := \K^{\ell} \setminus \cup_{H \in \A}H $ of a supersolvable arrangement $ \A $ has very interesting properties:
If $ \K = \C $, the complement $ M(\A) $ is fiber type \cite{tmodular}. 
In particular, $ M(\A) $ is a $ K(\pi, 1) $ space, i.e., the homotopy groups $ \pi_{i}(M(\A)) = 0 $ for $ i \geq 2 $. 
When $ \K = \R $, the complement $ M(\A) $ is a disjoint union of chambers. 
For chambers $ C, C^{\prime} $, define $ d(C, C^{\prime}) $ by the number of hyperplanes in $ \A $ separating $ C $ from $ C^{\prime} $. 
Bj\"{o}rner, Edelman, and Ziegler \cite{bezhyperplane} gave the wall-crossing formula as follows: 
There exists a base chamber $ B $ of $ \A $ such that
\begin{align*}
\sum_{C \in \Ch(\A)}t^{d(B,C)} = \prod_{i=1}^{\ell}(1+t+\dots + t^{d_{i}}), 
\end{align*}
where $ (d_{1}, \dots, d_{\ell}) $ is the exponents of $ \A $ and $ \Ch(\A) $ denotes the set of all chambers of $ \A $. 
Therefore, we derive the following corollary from our main theorems 
\ref{maintheoA} and \ref{maintheoB}.

\begin{cor}
\label{corss}
Let $ N = (N_{2}, N_{3}, \dots, N_{\ell}) $ be a nest. 
\begin{enumerate}
\item
If $\K=\C$, then the complement $M(\I_{N})$ 
of the cone over the $N$-Ish arrangement $\I_{N}$ is $K(\pi ,1)$.
\item
If $\K=\R$, then there exists a base chamber $ B \in \Ch (\I_{N}) $ 
such that
\[
\sum_{C \in \Ch(\I_{N})}t^{d(B,C)} 
=(1+t) \prod_{i=2}^{\ell}(1+t+\dots + t^{|N_{i}|+\ell -i}).
\]
\end{enumerate}
\end{cor}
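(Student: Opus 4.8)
The plan is to assemble Corollary~\ref{corss} from the two main theorems together with the results on supersolvable arrangements recalled in the paragraph preceding the statement; essentially no new argument is needed beyond bookkeeping. The one observation that feeds both parts is this: since $N$ is a nest, condition~(1) of Theorem~\ref{maintheoA} holds, so by the equivalence $(1)\Leftrightarrow(2)$ the cone $\I_{N}$ is supersolvable.

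For part~(1), with $\K=\C$, I would invoke \cite{tmodular}: the complement of a supersolvable arrangement is fiber type, so $M(\I_{N})$ admits a tower of fibre bundles whose successive fibres are copies of $\C$ with finitely many points deleted. Each such fibre is homotopy equivalent to a wedge of circles, hence a $K(\pi,1)$, and running the long exact homotopy sequence of a fibration up the tower gives $\pi_{i}(M(\I_{N}))=0$ for $i\ge 2$. This is exactly the chain of implications already spelled out before the corollary, so part~(1) is immediate.

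For part~(2), with $\K=\R$, I would apply the Bj\"orner--Edelman--Ziegler wall-crossing formula \cite{bezhyperplane} to the supersolvable arrangement $\I_{N}$: there is a base chamber $B\in\Ch(\I_{N})$ with
\[
\sum_{C\in\Ch(\I_{N})}t^{d(B,C)}=\prod_{i=1}^{\ell+1}\bigl(1+t+\dots+t^{d_{i}}\bigr),
\]
where $(d_{1},\dots,d_{\ell+1})$ is the tuple of exponents of $\I_{N}$, of length $\ell+1$ because $\I_{N}$ is an arrangement in $\K^{\ell+1}$. To put the right-hand side into the claimed shape I would read off these exponents from Theorem~\ref{maintheoB}. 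Here lies the only point needing care: Theorem~\ref{maintheoB} is stated only for non-decreasing tuples $N_{2}\subseteq\dots\subseteq N_{\ell}$, while a general nest becomes non-decreasing only after applying a suitable permutation $w$ of $\{2,\dots,\ell\}$. Relabelling the coordinates $x_{2},\dots,x_{\ell}$ by $w$ is a linear automorphism of $\K^{\ell+1}$ carrying $\I_{N}$ onto the $N$-Ish cone of the reordered tuple, and it changes neither $\Ch(\I_{N})$ nor the generating function $\sum_{C}t^{d(B,C)}$; so I may assume from the start that $N_{2}\subseteq\dots\subseteq N_{\ell}$, and then Theorem~\ref{maintheoB} gives $\exp\I_{N}=(0,\,1,\,|N_{2}|+\ell-2,\,|N_{3}|+\ell-3,\,\dots,\,|N_{\ell}|)$. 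Substituting, the exponent $0$ contributes the factor $1$, the exponent $1$ contributes $(1+t)$, and the remaining exponents contribute $\prod_{i=2}^{\ell}(1+t+\dots+t^{|N_{i}|+\ell-i})$, giving the asserted identity.

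The main obstacle is nothing more than the reordering step just described: once one notices that $N$-Ish cones of permuted tuples are linearly isomorphic, it is legitimate to sort the nest before quoting Theorem~\ref{maintheoB}, and the rest of the proof is a direct substitution into results already recalled in the excerpt.
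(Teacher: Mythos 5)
Your proposal is correct and follows essentially the same route as the paper: $N$ a nest gives supersolvability via Theorem~\ref{maintheoA}, part~(1) follows from Terao's fiber-type result, and part~(2) follows from the Bj\"orner--Edelman--Ziegler wall-crossing formula with the exponents read off from Theorem~\ref{maintheoB}. Your explicit remark that one may sort the nest by a coordinate permutation before invoking Theorem~\ref{maintheoB} is a point the paper leaves implicit, but it is the same argument.
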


The organization of this paper is as follows.
In Section 2, we review the theory of supersolvable arrangements and prove Theorem \ref{maintheoA}. 
In Section 3, we give an explicit expression of a base chamber as mentioned in Corollary \ref{corss} (2) for $ \I_{N} $ with $ N $ nested. 
In Section 4, we verify Theorem \ref{maintheoB} applying Saito's criterion. 
In Section 5, we recall the deleted arrangement $ \Sh{G} $ and $ \Is{G} $ defined by Armstrong and Rhoades in \cite{arish} 
and prove that $ \Sh{G} $ and $ \Is{G} $ share the freeness.

\section{Supersolvability and freeness of $ \I $}
For an arrangement $ \A $, let $ L(\A) $ be the set of nonempty intersections of hyperplanes in $ \A $. 
If $ \A $ is central, then $ L(\A) $ is a geometric lattice with the order by reverse inclusion: 
$ X \leq Y \Leftrightarrow Y \subseteq X $. 
In the rest of this section, ``arrangement" means ``central arrangement". 
The rank of an arrangement $ \A $, denoted by $ \rank(\A) $, is the codimension of $ \cap_{H \in \A}H $. 
We say that $ \A $ is essential if $ \rank(\A) $ is equal to the dimension of the ambient space of $ \A $. 

An arrangement $ \A $ is supersolvable if the intersection lattice $ L(\A) $ is supersolvable as defined by Stanley \cite{ssuper}.
The following lemma is widely known. 

\begin{lemma}[{\cite[Theorem 4.3]{bezhyperplane}}]
\label{lemmaSS}
An arrangement $ \A $ is supersolvable if and only if
there exists a filtration
\begin{align*}
\A = \A_{\ell} \supseteq \A_{\ell-1} \supseteq \dots \supseteq \A_{1}
\end{align*}
such that
\begin{enumerate}
\item $ \rank(\A_{i}) = i \; (i=1,2,\dots, \ell) $.
\item For any $ H, H^{\prime} \in \A_{i} $ with $ H \neq H^{\prime} $, there exists some $ H^{\prime\prime} \in \A_{i-1} $ such that $ H \cap H^{\prime} \subseteq H^{\prime\prime} $. 
\end{enumerate}
\end{lemma}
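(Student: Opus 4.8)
The plan is to translate the statement into the language of modular elements of the geometric lattice $L(\A)$, which is Stanley's definition of supersolvability. Recall that $L(\A)$ is supersolvable precisely when it has a maximal chain $\hat{0} = X_{0} \lessdot X_{1} \lessdot \dots \lessdot X_{\ell} = \hat{1}$ all of whose members are modular, where $X \in L(\A)$ is modular if $\rank(X \vee Y) + \rank(X \wedge Y) = \rank X + \rank Y$ for every $Y \in L(\A)$. Here, for an essential central arrangement, the join in $L(\A)$ is subspace intersection, the meet is the smallest flat containing the union, and $\rank = \codim$.

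The core technical input I would isolate first is a ``rank two'' characterization of modular coatoms: if $\rank \A = \ell$ and $Z \in L(\A)$ has $\rank Z = \ell - 1$, then $Z$ is modular in $L(\A)$ if and only if for every pair of distinct hyperplanes $H, H' \in \A$ there exists $H'' \in \A_{Z} := \{ H \in \A \mid Z \subseteq H \}$ with $H \cap H' \subseteq H''$. To prove this, I would use that $L(\A)$ is semimodular, so only the inequality $\rank(Z \vee Y) + \rank(Z \wedge Y) \ge \rank Z + \rank Y$ needs checking; since $Z$ is a coatom this is automatic when $Y \le Z$, and otherwise it is equivalent to $\rank(Z \wedge Y) = \rank Y - 1$. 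A counterexample $Y$ to the latter of minimal rank, if one existed, could be replaced by any coatom of the interval $[Z \wedge Y, Y]$, which is again a counterexample but of strictly smaller rank; hence it suffices to test $Y$ of rank two, i.e. $Y = H \cap H'$, and there $\rank(Z \wedge Y) = 1$ says exactly that some hyperplane of $\A_{Z}$ contains $H \cap H'$. I expect this equivalence --- together with keeping the lattice-theoretic bookkeeping straight --- to be the main obstacle; the rest is assembly.

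For the implication from supersolvability to a filtration, given a modular chain as above I would set $\A_{i} := \A_{X_{i}}$. Then $\rank \A_{i} = \rank X_{i} = i$, and $\A_{1} \subseteq \dots \subseteq \A_{\ell} = \A$ because $X_{\ell} \subseteq \dots \subseteq X_{1}$ as subspaces. For condition (2): the localization $L(\A_{X_{i}})$ is the lower interval $[\hat{0}, X_{i}]$ of $L(\A)$ (and is in fact a sublattice, since subspace intersections of overspaces of $X_{i}$ are overspaces of $X_{i}$), so the modular element $X_{i-1} \le X_{i}$ is modular in $L(\A_{X_{i}})$ and is a coatom there; applying the rank two characterization to $\A_{X_{i}}$, together with $(\A_{X_{i}})_{X_{i-1}} = \A_{X_{i-1}} = \A_{i-1}$, yields for distinct $H, H' \in \A_{i}$ a hyperplane $H'' \in \A_{i-1}$ with $H \cap H' \subseteq H''$.

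For the converse I would induct on $\ell$ (the cases $\ell \le 2$ being immediate). Put $X := \bigcap_{H \in \A_{\ell-1}} H$, a coatom of $L(\A)$ by (1). First, condition (2) with $i = \ell$ and the rank two characterization (noting $\A_{\ell-1} \subseteq \A_{X}$) show that $X$ is a modular coatom of $L(\A)$. Next I would check $\A_{\ell-1} = \A_{X}$: if some $H_{0} \in \A_{X} \setminus \A_{\ell-1}$ existed, then since $\rank \A = \ell > \rank X$ there is $H_{1} \in \A$ with $X \not\subseteq H_{1}$, and applying (2) to the pair $H_{0} \ne H_{1}$ produces $H'' \in \A_{\ell-1}$ with $H_{0} \cap H_{1} \subseteq H''$; a short codimension count forces $H_{0} \cap H_{1} = H_{0} \cap H''$ and hence $X \subseteq H_{1}$, a contradiction. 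Thus the truncated filtration $\A_{\ell-1} \supseteq \dots \supseteq \A_{1}$ is a filtration of $\A_{X}$ satisfying (1) and (2), so by induction $L(\A_{X}) = [\hat{0}, X]$ is supersolvable, say with modular chain $\hat{0} = Y_{0} \lessdot \dots \lessdot Y_{\ell-1} = X$. Finally, the transfer principle --- if $Z$ is a modular coatom of $L$ and $W \le Z$ is modular in $[\hat{0}, Z]$, then $W$ is modular in $L$, which one proves by setting $Y_{0} := Z \wedge Y$ and treating the cases $Y \le Z$ and $Y \not\le Z$ separately using modularity of $Z$ --- promotes each $Y_{j}$ to a modular element of $L(\A)$. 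Then $\hat{0} = Y_{0} \lessdot \dots \lessdot Y_{\ell-1} = X \lessdot \hat{1}$ is a maximal chain of modular elements, so $\A$ is supersolvable.
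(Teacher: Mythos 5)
The paper itself gives no proof of this lemma---it is quoted directly from Bj\"orner--Edelman--Ziegler---so there is no in-paper argument to compare against. Your overall strategy (translate supersolvability into a maximal chain of modular elements, characterize modular coatoms by a rank-two test, and induct on the rank) is the standard one, and most of it checks out: the passage from a modular chain to the filtration $\A_i=\A_{X_i}$, the identification $\A_{\ell-1}=\A_X$ via the codimension count, and the transfer of modularity from $[\hat 0,Z]$ up to $L(\A)$ are all correct as sketched.

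The one genuine gap is in the descent inside your rank-two characterization, i.e.\ exactly the step you flag as the main obstacle. For $Y\not\le Z$ write $d(Y):=\rank Y-1-\rank(Z\wedge Y)\ge 0$. If $Y'$ is a coatom of $[Z\wedge Y,\,Y]$, then $Z\wedge Y'=Z\wedge Y$ and $\rank Y'=\rank Y-1$, so $d(Y')=d(Y)-1$: the deficiency drops by exactly one. Consequently a counterexample ($d\ge 1$) of minimal rank is forced by your own descent to have $d(Y)=1$, and then its coatom $Y'$ has $d(Y')=0$ and is \emph{not} ``again a counterexample.'' As written, the argument only shows that a minimal counterexample satisfies $\rank(Z\wedge Y)=\rank Y-2$; it does not rule one out, so the reduction to rank two is unproved. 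The standard repair: set $W:=Z\wedge Y$ (of rank $\rank Y-2$), choose an atom $a\le Y$ with $a\not\le W$, and then an atom $a'\le Y$ with $a'\not\le W\vee a$ (possible since $W\vee a<Y$ and $Y$ is the join of its atoms). The rank-two hypothesis applied to $a\vee a'$ yields an atom $b\le Z$ with $b\le a\vee a'\le Y$, hence $b\le Z\wedge Y=W$; since $a\ne b$, the rank-two flats $a\vee b$ and $a\vee a'$ coincide, giving $a'\le a\vee b\le W\vee a$, a contradiction. With this inserted, the rest of your proof goes through.
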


Let $ \A $ be an arrangement. 
For a hyperplane $ H \in \A $, define arrangements
\begin{align*}
\A^{\prime} := \A \setminus \{H\} \text{ and } 
\A^{\prime\prime} := \left\{H^{\prime} \cap H \mid H^{\prime} \in \A^{\prime} \right\}. 
\end{align*}
The tuple $ (\A, \A^{\prime}, \A^{\prime\prime}) $ is called the triple of arrangements with respect to $ H $.
For a triple $ (\A, \A^{\prime}, \A^{\prime\prime}) $, the Addition Theorem \cite{tarrangementsI,tarrangementsII} asserts that if $ \A^{\prime} $ and $ \A^{\prime\prime} $ are free and $ \exp \A^{\prime\prime} \subset \exp \A^{\prime}$, then $ \A $ is free. 
\begin{define}
Define the inductive freeness by the following: 
\begin{enumerate}
\item The empty arrangement is inductively free. 
\item $ \A $ is inductively free if there exists $ H \in \A $ such that $ \A^{\prime} $ and $ \A^{\prime\prime} $ are inductively free and $ \exp \A^{\prime\prime} \subset \exp \A^{\prime} $. 
\end{enumerate}
\end{define}
Thanks to the Addition Theorem, the inductive freeness implies the freeness. 
Moreover, it is also known that the the supersolvability implies the inductive freeness
(see \cite[Theorem 4.58]{otarrangements} for example). 

We will use the following lemma which is a part of the Addition-Deletion Theorem:
\begin{lemma}[{\cite[Theorem 4.46]{otarrangements}}]
\label{lemmaVAD3}
Let $ (\A, \A^{\prime}, \A^{\prime\prime}) $ be a triple. 
Suppose that $ \A $ is an essential arrangement of rank $ 3 $
and that arrangements $ \A^{\prime} $ and $ \A^{\prime\prime} $ are free with $ \exp(\A^{\prime}) = (1,a,b) $ and $ \exp(\A^{\prime\prime}) = (1,c) $. 
If $ c \not\in \{a,b\} $ then $ \A $ is not free. 
\end{lemma}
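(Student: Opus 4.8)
The plan is to turn the hypothetical freeness of $\A$ into a numerical constraint on $\exp\A$ and then contradict $c\notin\{a,b\}$, running the standard Addition--Deletion argument on the triple $(\A,\A',\A'')$. Since $\A'$ and $\A''$ are free, Theorem~\ref{factorization} gives $\chi(\A',t)=(t-1)(t-a)(t-b)$ and $\chi(\A'',t)=(t-1)(t-c)$. The deletion--restriction formula $\chi(\A,t)=\chi(\A',t)-\chi(\A'',t)$, valid for an arbitrary triple, then yields
\[
\chi(\A,t)=(t-1)\bigl[(t-a)(t-b)-(t-c)\bigr].
\]

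Now suppose, for contradiction, that $\A$ is free. Since $\A$ is essential of rank $3$, all its exponents are positive and the Euler derivation forces $1\in\exp\A$, so $\exp\A=(1,d_2,d_3)$ with $d_2,d_3\in\Z_{\ge 1}$, and Theorem~\ref{factorization} gives $\chi(\A,t)=(t-1)(t-d_2)(t-d_3)$. Comparing with the display above,
\[
d_2+d_3=a+b+1,\qquad d_2 d_3=ab+c .
\]
Separately, because $\A$ and $\A'$ are both free and $|\A'|=|\A|-1$, the Addition--Deletion machinery applies: the inclusion $D(\A)\hookrightarrow D(\A')$ has a nonzero linear form (namely $\alpha_{H}$ up to a scalar) as determinant, so the cokernel $D(\A')/D(\A)$ is a rank-one free module over the coordinate ring of the deleted hyperplane; consequently $\exp\A'$ is obtained from $\exp\A$ by lowering exactly one entry by $1$ (this is the content of \cite[Chapter~4]{otarrangements}, cf.\ the Addition Theorem quoted above). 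Hence $\{a,b\}$ equals $\{d_2-1,d_3\}$ or $\{d_2,d_3-1\}$, or else (when $d_2=1$ or $d_3=1$) the lowered entry is the Euler exponent $1$.

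A short bookkeeping with the two numerical relations forces $c\in\{a,b\}$ in each case: e.g.\ if $\{a,b\}=\{d_2-1,d_3\}$ then $ab=(d_2-1)d_3=d_2 d_3-d_3$, so $c=d_2 d_3-ab=d_3\in\{a,b\}$, and the remaining cases are entirely analogous. This contradicts the hypothesis $c\notin\{a,b\}$, so $\A$ is not free. The step carrying the real weight is the passage from ``$\A$ and $\A'$ both free'' to ``$\exp\A'$ differs from $\exp\A$ by lowering a single entry''; this is exactly the input supplied by Addition--Deletion theory and the exact sequence $0\to D(\A)\to D(\A')\to D(\A'')$, whereas producing and exploiting the numerical relations is routine arithmetic. (Note that the characteristic polynomial alone is insufficient: already $a=b=2,\ c=0$ satisfies the two relations with $d_2,d_3$ integers, so the module-theoretic exponent constraint is genuinely needed.)
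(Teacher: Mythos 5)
The paper offers no proof of this lemma --- it is quoted directly from Orlik--Terao \cite[Theorem 4.46]{otarrangements} --- so the comparison is between your derivation and the standard Addition--Deletion machinery it rests on. Your argument is essentially correct, and you rightly isolate the one load-bearing input: if $\A$ and $\A'=\A\setminus\{H\}$ are both free, then $\exp\A'$ is obtained from $\exp\A$ by decreasing a single entry by one. Two caveats. First, your justification of that input is compressed exactly where the work happens: knowing that the transition matrix between bases of $D(\A')$ and $D(\A)$ has determinant $\alpha_H$ (up to a unit) does not by itself make the cokernel $D(\A')/D(\A)$ free of rank one over $S/\alpha_H S$; one also needs that this cokernel has projective dimension at most $1$ over $S$, hence is maximal Cohen--Macaulay over $S/\alpha_H S$ and therefore free, with the rank then pinned to one by the determinant or a Hilbert-series count. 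Relatedly, the exact sequence you invoke is written in the wrong direction: the standard one is $0\to D(\A')\xrightarrow{\ \cdot\alpha_H\ } D(\A)\xrightarrow{\ \rho\ } D(\A'')$, since restriction to $H$ is not defined on all of $D(\A')$. Second, the characteristic-polynomial bookkeeping, though correct (including the degenerate case where the lowered exponent is the Euler exponent $1$), is avoidable: the strong form of the Addition--Deletion Theorem --- which is what the proof of \cite[Theorem 4.46]{otarrangements} actually establishes --- says that if $\A$ and $\A'$ are both free then $\A''$ is free with $\exp\A''$ equal to the common part $\{b_1,b_2\}$ of $\exp\A=\{b_1,b_2,b_3\}$ and $\exp\A'=\{b_1,b_2,b_3-1\}$; then $\{1,c\}=\{b_1,b_2\}$ is a sub-multiset of $\{1,a,b\}$, which forces $c\in\{a,b\}$ with no arithmetic at all. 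Your route trades that stronger statement for the weaker exponent-shift claim plus deletion--restriction and Terao's factorization, which is a legitimate and self-contained alternative once the cokernel step is fully justified.
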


We are now prepared to prove Theorem \ref{maintheoA}. 

\begin{proof}[Proof of Theorem \ref{maintheoA}]
$ (1) \Rightarrow (2) $
Without loss of generality, we may assume that $ N_{2} \supseteq N_{3} \supseteq \dots \supseteq N_{\ell} $. 
If $N_{2} = \emptyset$, i.e., $N_{2}=N_{3}=\cdots =N_{\ell}=\emptyset$,
then $\I$ is the Coxeter arrangement $\Co{\ell-1}$ 
of the type $A_{\ell-2}$ which is supersolvable
(see \cite[Example 2.33]{otarrangements} or \cite[Example 2.6]{ssuper}).
Assume that $N_{2} \neq \emptyset$.
For each $ i \in \{1,2, \dots, \ell\} $, define $ X_{i} \in L(\I) $ by
\[
X_{i} := \{ z=0 \} \cap \bigcap_{j=2}^{i} \{ x_{1}-x_{j}=0 \}
\cap \bigcap_{2 \leq j <k \leq i} \{ x_{j}-x_{k}=0 \}. 
\]
Then the rank of the localization $ \I_{i} := \I_{X_{i}} = \left\{ H \in \I \mid H \supseteq X_{i} \right\} $ is equal to $ i $ and we have 
\begin{multline*}
\I_{i} 
= \left\{ \{x_{1}-x_{j} = az \} \mid 2 \leq j \leq i \text{ and } a \in N_{j} \right\} \\
\cup \left\{ \{ x_{j} - x_{k} = 0 \} \mid 2 \leq j < k \leq i \right\}
\cup \left\{ \{ z = 0\} \right\}
\end{multline*}
Hence there exists a filtration
\begin{align*}
\I = \I_{\ell} \supseteq \I_{\ell-1} \supseteq \dots \supseteq \I_{1}.
\end{align*}
By Lemma \ref{lemmaSS}, we have only to verify that for any $ H,H^{\prime} \in \I_{i} $ with $ H \neq H^{\prime} $ there exists some $ H^{\prime\prime} \in \I_{i-1} $ such that $ H \cap H^{\prime} \subseteq H^{\prime\prime} $ for each $ i \in \{2, \dots, \ell\} $. 
We may assume that both $ H $ and $ H^{\prime} $ do not belong to $ \I_{i-1} $. 
Then $ H $ and $ H^{\prime} $ belong to
\begin{align*}
\I_{i} \setminus \I_{i-1} = \left\{ \{x_{1}-x_{i} = az\} \mid a \in N_{i} \right\} \cup \left\{ \{ x_{j} -x_{i} = 0 \}\mid 2 \leq j < i \right\}. 
\end{align*}
First, let $ a $ and $ b $ be distinct elements in $ N_{i} $.
Suppose that $ H=\{x_{1}-x_{i} = az\} $ and $ H^{\prime}=\{x_{1}-x_{i}=bz\} $. 
Then $ H \cap H^{\prime} \subseteq \{z=0\} \in \I_{i-1} $. 
Second, let $ j $ and $ k $ be distinct integers in $ \{2, \dots, i-1 \} $. 
Assume that $ H = \{ x_{j} - x_{i} = 0 \} $ and $ H^{\prime} = \{x_{k} - x_{i} = 0\} $. 
Then $ H \cap H^{\prime} \subseteq \{x_{j} - x_{k} = 0 \} \in \I_{i-1} $. 
Finally, let $ H=\{x_{1}-x_{i}=az\} $ and $ H^{\prime} = \{x_{j}-x_{i} = 0\} $ with $ a \in N_{i} $ and $ 2 \leq j < i $. 
Then $ H \cap H^{\prime} \subseteq \{x_{1} - x_{j} = az\} \in \I_{i-1}$ by the assumption $ a \in N_{i} \subseteq N_{j} $. 
Thus the cone over the $ N $-Ish arrangement $ \I $ is supersolvable. 

$ (2) \Rightarrow (3) \Rightarrow (4) $ We have nothing to prove as mentioned before. 

$ (4) \Rightarrow (1) $
When $\ell = 2$, the tuple $N=( N_{2} )$ is a nest.
For $\ell \geq 3$, we will prove that
if $ N $ is not a nest then $\I$ is not free
by induction on $\ell$.
First, let $ \ell = 3 $. 
Then we have $ N = (N_{2},N_{3}) $. 
Let $ H \in \I $ be the hyperplane $ \{ x_{2}-x_{3} = 0 \} $ and $ (\I, \I^{\prime}, \I^{\prime\prime}) $ the triple with respect to $ H $. 
One can verify easily that the homogeneous derivations
\begin{align*}
\sum_{i=1}^{3}x_{i}\frac{\partial}{\partial x_{i}} + z\frac{\partial}{\partial z}, \quad
\prod_{a \in N_{2}}(x_{1}-x_{2}-az)\frac{\partial}{\partial x_{2}}, \quad
 \prod_{a \in N_{3}}(x_{1}-x_{3}-az)\frac{\partial}{\partial x_{3}}
\end{align*}
form a basis for $ D(\I^{\prime}) $ (with the non-essential derivation $ \sum_{i=1}^{3}\frac{\partial}{\partial x_{i}} + \frac{\partial}{\partial z} $). 
Hence the arrangement $ \I^{\prime} $ is free with exponents $ (1,|N_{2}|,|N_{3}|) $. 
The arrangement $ \I^{\prime\prime} $ is also free with exponents $ (1,|N_{2} \cup N_{3}|) $ since $ \rank(\I^{\prime\prime}) = 2 $ and $ |\I^{\prime\prime}| = 1 + |N_{2}\cup N_{3}| $. 
By the assumption, $ N $ is not a nest, i.e., $ N_{2} \not\subseteq N_{3} $ and $ N_{2} \not\supseteq N_{3} $, hence we have that $ |N_{2} \cup N_{3}| $ is strictly larger than both of $ |N_{2}| $ and $ |N_{3}| $. 
Therefore, by Lemma \ref{lemmaVAD3}, we have concluded that $ \I $ is not free. 

Now suppose that $ \ell > 3 $. 
Since $ N $ is not a nest, there exist integers $ i,j $ such that $ N_{i} \not\subseteq N_{j} $ and $ N_{i} \not\supseteq N_{j} $. 
Define $ X \in L(\I) $ by
\begin{align*}
X := \left\{ z=x_{1}-x_{i}=x_{1}-x_{j}=0 \right\}. 
\end{align*}
Then we have
\begin{multline*}
\I_{X} = 
\left\{ \{ x_{1}-x_{k} = az \} \mid k \in \{i,j\} \text{ and } a \in N_{k} \right\} 
\cup \left\{ \{ x_{i} - x_{j} = 0 \}, \{ z=0 \} \right\}. 
\end{multline*}
Hence $ \I_{X} $ is equivalent to $ \c(\Is{N_{i},N_{j}}) $ discussed 
in the above paragraph. 
Therefore the localization $ \I_{X} $ is not free, neither is $ \I $. 
\end{proof}

\section{A base chamber for $ \I $}
In this section, we set $ \K = \R $ and will give an explicit expression of a base chamber as mentioned in Corollary \ref{corss} (2) for $ \I $. 
Let $ \A $ be an arrangement and $ \Ch(\A) $ the set of all chambers of $ \A $. 
Fixing a chamber $ B \in \Ch(\A) $, we can define a partial order on $ \Ch(\A) $:  $ C \leq C^{\prime} \Leftrightarrow S_{B}(C) \subseteq S_{B}(C^{\prime}) $, where $ S_{B}(C) $ denotes the set of hyperplanes in $ \A $ which separate $ C $ from $ B $. 
Let $ P(\A, B) $ denote the set $ \Ch(\A) $ with this partial order. 

For a subarrangement $ \B \subseteq \A $, define a map $ \pi \colon \Ch(\A) \rightarrow \Ch(\B) $ by $ \pi(C) = C^{\prime} $, 
where $ C^{\prime} \in \Ch(\B) $ is a unique chamber including $ C \in \Ch(\A) $. 
Let $ \A $ be a supersolvable arrangement. 
Then there exists a filtration
\begin{align*}
\A = \A_{\ell} \supseteq \A_{\ell - 1} \supseteq \dots \supseteq \A_{1},
\end{align*}
which satisfies the conditions in Lemma \ref{lemmaSS}. 
This sequence of subarrangements induces a chain of maps: 
\begin{align*}
\Ch(\A_{\ell}) 
\xrightarrow{\pi_{\ell}} \Ch(\A_{\ell - 1}) 
\xrightarrow{\pi_{\ell -1}} \dots 
\xrightarrow{\pi_{2}} \Ch(\A_{1}). 
\end{align*}

\begin{define}
For each $ 1 \leq i \leq \ell $, define that a chamber $ B \in \Ch(\A_{i}) $ is \textbf{canonical} by the following: 
\begin{enumerate}
\item Any chambers in $ \Ch(\A_{1}) $ or $ \Ch(\A_{2}) $ are canonical. 
\item When $ i \geq 3 $, a chamber $ B \in \Ch(\A_{i}) $ is canonical if $ \pi_{i}(B) \in \Ch(\A_{i-1}) $ is canonical and $ \mathcal{F}(B) := \pi_{i}^{-1}(\pi_{i}(B)) $ is linearly ordered in $ P(\A_{i}, B) $. 
\end{enumerate}
\end{define}

Bj\"{o}rner, Edelman, and Ziegler gave the wall-crossing formula with a canonical chamber. 
\begin{theorem}[{\cite[Theorem 4.4]{bezhyperplane}} ]
Let $ \A $ be a supersolvable arrangement and $ B \in \Ch(\A) $ a canonical chamber. 
Then
\begin{align*}
\sum_{C \in \Ch(\A)}t^{d(B,C)} = \prod_{i=1}^{\ell}(1+t+\dots + t^{d_{i}}), 
\end{align*}
where $ (d_{1}, \dots, d_{\ell}) $ is the exponents of $ \A $ and $ d(B,C) = |S_{B}(C)| $ denotes the number of hyperplanes in $ \A $ separating $ C $ from $ B $. 
\end{theorem}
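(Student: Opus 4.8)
The plan is to argue by induction on $ \rank(\A) $, organized along the supersolvable filtration $ \A = \A_{\ell} \supseteq \dots \supseteq \A_{1} $ witnessing the canonicity of $ B $ and the induced chain of maps $ \pi_{\ell}, \dots, \pi_{2} $ on chamber sets. Since essentializing an arrangement changes neither $ \Ch(\A) $ nor the distances $ d(\cdot,\cdot) $, and only deletes trivial factors $ 1 $ from the right-hand side, I would first reduce to the essential case; the base cases $ \rank(\A) \le 1 $ are then immediate. For the inductive step put $ \bar{B} := \pi_{\ell}(B) \in \Ch(\A_{\ell-1}) $. By the definition of a canonical chamber, $ \bar{B} $ is canonical for the supersolvable arrangement $ \A_{\ell-1} $ (whose witnessing filtration is the truncation $ \A_{\ell-1} \supseteq \dots \supseteq \A_{1} $), so the induction hypothesis yields $ \sum_{\bar{C} \in \Ch(\A_{\ell-1})} t^{d_{\A_{\ell-1}}(\bar{B},\bar{C})} = \prod_{i=1}^{\ell-1}(1+t+\dots+t^{d_{i}}) $. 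Throughout I use the standard fact that the exponents of a supersolvable arrangement along such a filtration are the step sizes, so I write $ d_{i} = |\A_{i}\setminus\A_{i-1}| $ (with $ \A_{0} := \emptyset $), and that a supersolvable arrangement is free, so by Zaslavsky's theorem $ |\Ch(\A_{i})| = \prod_{j\le i}(1+d_{j}) $; in particular $ |\Ch(\A_{\ell})| = (d_{\ell}+1)\,|\Ch(\A_{\ell-1})| $.

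Next come two structural inputs that are essentially free. Condition (2) of Lemma \ref{lemmaSS} forces any two distinct hyperplanes of $ \A_{\ell}\setminus\A_{\ell-1} $ to be disjoint inside every open chamber $ \bar{C} $ of $ \A_{\ell-1} $ --- their intersection lies in some $ H'' \in \A_{\ell-1} $, which $ \bar{C} $ avoids --- so the hyperplanes of $ \A_{\ell}\setminus\A_{\ell-1} $ meeting $ \bar{C} $ cut $ \bar{C} $ into a path of sub-chambers, consecutive ones sharing a single wall. Comparing the obvious bound (at most $ d_{\ell} $ of them meet $ \bar{C} $) with the count $ |\Ch(\A_{\ell})| = (d_{\ell}+1)|\Ch(\A_{\ell-1})| $ forces exactly $ d_{\ell} $ of them to meet each $ \bar{C} $, so every fiber $ \pi_{\ell}^{-1}(\bar{C}) $ is a path on $ d_{\ell}+1 $ chambers. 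On the bookkeeping side, for $ C \in \pi_{\ell}^{-1}(\bar{C}) $ a hyperplane of $ \A_{\ell-1} $ separates $ B $ from $ C $ iff it separates the $ \A_{\ell-1} $-chambers containing them, so $ S_{B}(C)\cap\A_{\ell-1} = S_{\bar{B}}(\bar{C}) $, whence $ d(B,C) = d_{\A_{\ell-1}}(\bar{B},\bar{C}) + |S_{B}(C)\cap(\A_{\ell}\setminus\A_{\ell-1})| $, and summing gives $ \sum_{C\in\Ch(\A)} t^{d(B,C)} = \sum_{\bar{C}} t^{d_{\A_{\ell-1}}(\bar{B},\bar{C})}\bigl(\sum_{C\in\pi_{\ell}^{-1}(\bar{C})} t^{|S_{B}(C)\cap(\A_{\ell}\setminus\A_{\ell-1})|}\bigr) $.

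What remains is to prove that this double sum equals $ (1+t+\dots+t^{d_{\ell}})\prod_{i=1}^{\ell-1}(1+t+\dots+t^{d_{i}}) $, and I expect this to be the genuine difficulty. The subtlety --- and the reason the word \emph{canonical} cannot be dropped --- is that the identity does \emph{not} hold fiber by fiber: the inner polynomial $ \sum_{C\in\pi_{\ell}^{-1}(\bar{C})} t^{|S_{B}(C)\cap(\A_{\ell}\setminus\A_{\ell-1})|} $ is generally \emph{not} $ 1+t+\dots+t^{d_{\ell}} $ (already for a non-special base chamber of a rank-$ 2 $ arrangement one sees fibers contributing, say, $ 1+2t $ rather than $ 1+t+t^{2} $), so there is real cancellation among the fibers which must be controlled globally. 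I would attack this through the poset of regions $ P(\A_{\ell},B) $: canonicity is precisely the hypothesis that makes $ \pi_{\ell} $ present $ P(\A_{\ell},B) $ over $ P(\A_{\ell-1},\bar{B}) $ with a recursive structure rigid enough to factor the expected $ (1+t+\dots+t^{d_{\ell}}) $ out of the rank-generating function $ \sum_{C} t^{d(B,C)} $; this is the content of Bj\"{o}rner--Edelman--Ziegler's study of lattices of regions. It is the exact analogue, for supersolvable arrangements, of the classical factorization $ \sum_{w\in W} t^{\ell(w)} = \prod_{i}(1+t+\dots+t^{d_{i}}) $ for a finite Coxeter group $ W $, proved by writing $ W $ as a parabolic subgroup times its minimal coset representatives with additive length; the step one wants is the corresponding ``parabolic'' splitting of $ \Ch(\A_{\ell}) $ over $ \Ch(\A_{\ell-1}) $, and canonicity of $ B $ is what supplies it. Granting this, the induction hypothesis closes the argument; since $ (d_{1},\dots,d_{\ell}) = \exp(\A) $, this is exactly the asserted identity. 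The main obstacle is therefore this global compatibility of the fibration $ \pi_{\ell} $ with the length function $ d(B,\cdot) $ for a canonical base chamber.
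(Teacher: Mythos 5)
First, note that the paper does not prove this statement at all: it is quoted verbatim from Bj\"{o}rner--Edelman--Ziegler with a citation and used as a black box, so there is no internal argument to compare yours against; the only question is whether your standalone argument is complete. It is not. The preparatory reductions are all correct and worth having: essentialization, the identification $ d_{i} = |\A_{i}\setminus\A_{i-1}| $, the observation that two hyperplanes of $ \A_{\ell}\setminus\A_{\ell-1} $ cannot cross inside an open chamber of $ \A_{\ell-1} $ so that each fiber of $ \pi_{\ell} $ is a path of exactly $ d_{\ell}+1 $ chambers, the splitting $ d(B,C) = d_{\A_{\ell-1}}(\bar{B},\bar{C}) + |S_{B}(C)\cap(\A_{\ell}\setminus\A_{\ell-1})| $, and the resulting double sum. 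You are also right, and it is a genuinely good observation, that the inner fiber polynomial need not equal $ 1+t+\dots+t^{d_{\ell}} $ fiber by fiber even for a canonical $ B $ (your $ 1+2t $ example is real: for three concurrent lines with the middle sector as base --- canonical by definition, since every rank-$ 2 $ chamber is --- the two fibers contribute $ 1+2t $ and $ 2t+t^{2} $, and only the $ t $-weighted total factors). But at exactly that point the proof stops. The sentence ``Granting this, the induction hypothesis closes the argument,'' where ``this'' is the assertion that the double sum factors, assumes the entire content of the theorem; and invoking ``the content of Bj\"{o}rner--Edelman--Ziegler's study of lattices of regions'' for that step is circular, since the statement you are proving \emph{is} their Theorem 4.4.

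Concretely, the missing piece is the mechanism by which canonicity --- a hypothesis imposed only on the single fiber $ \mathcal{F}(B)=\pi_{\ell}^{-1}(\pi_{\ell}(B)) $ at each level, and only for levels $ \geq 3 $ --- controls the $ t^{d(\bar{B},\bar{C})} $-weighted sum of all the fiber polynomials. Along each fiber-path the new-separation count is a $ \pm 1 $ walk that crosses each of the $ d_{\ell} $ new hyperplanes exactly once, and one must show that the deviations of these walk enumerators from $ 1+t+\dots+t^{d_{\ell}} $ cancel in the weighted sum. The analogy with the parabolic coset factorization of $ \sum_{w}t^{\ell(w)} $ names the desired conclusion but provides no argument, precisely because the ``additive length over coset representatives'' property is what fails fiber by fiber here. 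Until you supply a combinatorial lemma that effects this cancellation (or reprove the relevant lattice-of-regions structure theory), the argument is an honest reduction of the theorem to its central claim, not a proof of it.
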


We will give a canonical chamber for $ \I $ concretely. 
\begin{theorem}
Let $ N = (N_{2}, N_{3}, \dots, N_{\ell}) $ with $ N_{2} \supseteq N_{3} \supseteq \dots \supseteq N_{\ell} $.
Then the chamber 
\begin{align*}
B_{\ell} := \left(\bigcap_{2 \leq j \leq \ell} \{ x_{1} - x_{j} < \min N_{j} \}\right) \cap \left\{x_{2} < x_{3} < \cdots < x_{\ell}\right\} \cap \{ z > 0 \} \in \Ch(\I)
\end{align*}
is canonical. 
\end{theorem}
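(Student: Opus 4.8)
The plan is to run an induction along the supersolvable filtration $\I = \I_{\ell} \supseteq \I_{\ell-1} \supseteq \dots \supseteq \I_{1}$ built in the proof of Theorem~\ref{maintheoA}, where $\I_{i} = \I_{X_{i}}$ and, for $2 \leq i \leq \ell$,
\[
\I_{i} \setminus \I_{i-1} = \left\{ \{x_{1}-x_{i}=az\} \mid a \in N_{i} \right\} \cup \left\{ \{x_{j}-x_{i}=0\} \mid 2 \leq j < i \right\}.
\]
(When $N_{2} = \emptyset$, $\I = \Co{\ell-1}$ is a Coxeter arrangement of type $A_{\ell-2}$ and $B_{\ell}$ is a coned Weyl chamber, which is classically canonical; so we assume $N_{2} \neq \emptyset$, which is precisely what guarantees $\rank(\I_{i}) = i$ for all $i$.) First I would record the image of $B_{\ell}$ under the chain of projections: setting $B_{1} := \{z>0\}$ and, for $2 \leq i \leq \ell$,
\[
B_{i} := \left(\bigcap_{2 \leq j \leq i} \{x_{1}-x_{j} < (\min N_{j})z\}\right) \cap \{x_{2} < \dots < x_{i}\} \cap \{z>0\},
\]
one checks at once that $B_{i} \in \Ch(\I_{i})$ and $\pi_{i}(B_{i}) = B_{i-1}$, so that $B_{i} = (\pi_{i+1} \circ \dots \circ \pi_{\ell})(B_{\ell})$. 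By the definition of canonical it then suffices to prove by induction on $i$ that $\mathcal{F}(B_{i}) := \pi_{i}^{-1}(B_{i-1})$ is linearly ordered in $P(\I_{i}, B_{i})$ for all $i \geq 3$, the base cases $i \leq 2$ being immediate from item~(1) of the definition.

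The core of the inductive step ($i \geq 3$) is a description of the fiber $\mathcal{F}(B_{i})$. No hyperplane of $\I_{i-1}$ involves $x_{i}$, so $B_{i-1}$ is of the form $U \times \K_{x_{i}}$ with $U$ an open convex set in the remaining coordinates, and each $H \in \I_{i} \setminus \I_{i-1}$ meets $B_{i-1}$ in the graph $\{x_{i} = f_{H}\}$ of a linear form $f_{H}$ not involving $x_{i}$: $f_{H} = x_{1}-az$ if $H = \{x_{1}-x_{i}=az\}$, and $f_{H} = x_{j}$ if $H = \{x_{j}-x_{i}=0\}$. I would then verify that on $U$ these forms are pairwise non-crossing and in fact totally ordered; writing $N_{i} = \{a_{1} < \dots < a_{k}\}$ with $k = |N_{i}|$, throughout $B_{i-1}$ one has
\[
x_{1}-a_{k}z < \dots < x_{1}-a_{1}z < x_{2} < x_{3} < \dots < x_{i-1}.
\]
The two internal chains of inequalities come from $z > 0$ and from $x_{2} < \dots < x_{i-1}$; the one inequality requiring the hypothesis is $x_{1}-az < x_{j}$ for $a \in N_{i}$ and $2 \leq j \leq i-1$, which holds because in $B_{i-1}$ we have $x_{1}-x_{j} < (\min N_{j})z$ while $N_{j} \supseteq N_{i} \ni a$ forces $a \geq \min N_{j}$, so $x_{1}-x_{j} < (\min N_{j})z \leq az$. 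It follows that $\mathcal{F}(B_{i})$ is exactly the set of $m+1$ ``slabs'' $C_{0}, C_{1}, \dots, C_{m}$, with $m := |N_{i}| + i - 2$, cut from $B_{i-1}$ by the ordered graphs: $C_{p}$ is given by $c_{p} < x_{i} < c_{p+1}$, where $c_{1} < \dots < c_{m}$ is the displayed list and $c_{0} = -\infty$, $c_{m+1} = +\infty$.

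Finally I would identify $B_{i}$ with the top slab $C_{m}$ — in $B_{i}$ one has $x_{1}-x_{i} < (\min N_{i})z \leq az$ for every $a \in N_{i}$ and $x_{i} > x_{i-1}$, so $x_{i}$ lies above all the $c_{r}$ — and read off the induced order on $\mathcal{F}(B_{i})$. Any two chambers of the fiber lie in the same chamber $B_{i-1}$ of $\I_{i-1}$, so only hyperplanes of $\I_{i} \setminus \I_{i-1}$ can separate them, and the hyperplane with form $c_{r}$ separates $C_{p}$ from $B_{i} = C_{m}$ precisely when $p < r$; hence $S_{B_{i}}(C_{p}) = \{H_{p+1}, \dots, H_{m}\}$ and these sets form a chain $S_{B_{i}}(C_{0}) \supsetneq \dots \supsetneq S_{B_{i}}(C_{m}) = \emptyset$ under inclusion, so $\mathcal{F}(B_{i})$ is linearly ordered in $P(\I_{i}, B_{i})$. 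Running the induction up to $i = \ell$ then yields that $B_{\ell}$ is canonical. The only mildly delicate points are the bookkeeping that $B_{i}$ is a genuine chamber of $\I_{i}$ with $\pi_{i}(B_{i}) = B_{i-1}$ (routine, since the listed inequalities already pin down a side of every hyperplane of $\I_{i}$) and the key inequality $x_{1}-az < x_{j}$, which is exactly the combinatorial content of the nest condition; I expect stating that inequality and the resulting total order of the forms $c_{r}$ cleanly to be the main thing to nail down.
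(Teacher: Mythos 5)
Your proof is correct and follows essentially the same route as the paper: induction along the supersolvable filtration $\I_{2}\subseteq\dots\subseteq\I_{\ell}$, showing the fiber over the previous base chamber is a chain because the new hyperplanes are totally ordered over $B_{i-1}$, with the nest condition $N_{j}\supseteq N_{i}$ supplying exactly the one non-obvious inequality $x_{1}-az<x_{j}$ (the paper phrases this as the inclusion $[x_{2}-x_{\ell}<0]\subseteq[x_{1}-x_{\ell}<a_{1}z]$ via $\min N_{2}\le a_{1}$, and your explicit slab decomposition is just a more geometric rendering of its chain of sets $[x<k]$). Note also that your homogenized half-spaces $\{x_{1}-x_{j}<(\min N_{j})z\}$ match what the paper actually uses in its proof.
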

\begin{proof}
First, we show that $ B_{\ell} $ is a chamber of $ \I $. 
Indeed, $ B_{\ell} $ can be expressed as the intersection of half spaces with respect to all hyperplanes in $ \I $: 
\begin{align*}
B_{\ell} = \left(\bigcap_{\substack{2 \leq j \leq \ell \\ a \in N_{j}}} \{ x_{1} - x_{j} < a \}\right) \cap \left( \bigcap_{2 \leq j < k \leq \ell} \{ x_{j} - x_{k} < 0 \} \right) \cap \{ z > 0 \}. 
\end{align*}
Therefore $ B_{\ell} $ is a chamber or the empty set. 
Since the point
\begin{align*}
(x_{1}, x_{2}, \dots, x_{\ell}, z) = \left(1+\min N_{2}, 2, \dots, \ell, 1 \right)
\end{align*}
belongs to $ B_{\ell} $, we have that $ B_{\ell} $ is not empty. 
Thus $ B_{\ell} $ is a chamber of $ \I $. 

Next, we will prove that $ B_{\ell} $ is canonical by induction on $ \ell $. 
By definition, $ B_{2} $ is canonical. 
Assume that $ \ell \geq 3 $. 
Let $ \I_{\ell - 1} $ be the subarrangement of $ \I $ as defined in Section 2, namely 
\begin{multline*}
\I_{\ell - 1} 
= \left\{ \{x_{1}-x_{j} = az \} \mid 2 \leq j \leq \ell - 1 \text{ and } a \in N_{j} \right\} \\
\cup \left\{ \{ x_{j} - x_{k} = 0 \} \mid 2 \leq j < k \leq \ell - 1 \right\}
\cup \left\{ \{ z = 0\} \right\}. 
\end{multline*}
Since $ \I_{\ell - 1} $ is equivalent to $ \c(\Is{N_{2}, \dots, N_{\ell-1}}) $, the chamber $ B_{\ell-1} \in \I_{\ell - 1} $ is canonical by the induction hypothesis. 
An inclusion $ B_{\ell} \subseteq B_{\ell -1} $ yields $ \pi_{\ell}(B_{\ell}) = B_{\ell - 1} $. 
Therefore it suffices to show that $ \mathcal{F}(B_{\ell}) $ is linearly ordered. 
For a chamber $ C \in \mathcal{F}(B_{\ell}) $, every hyperplane $ H \in \I_{\ell-1} $ does not belong to $ S_{B_{\ell}}(C) $ since $ \pi_{\ell}(C) = \pi_{\ell}(B_{\ell}) $, i.e., $ C $ is on the same side as $ B_{\ell} $ with respect to $ H $. 

For every hyperplane $ \{x = k\} \in \I \setminus \I_{\ell-1} $, define a subset of $ \mathcal{F}(B_{\ell}) $ by
\begin{align*}
[x < k] := \left\{ C \in \mathcal{F}(B_{\ell}) \mid C \subseteq \{x < k \} \right\}.
\end{align*}
Let $ N_{\ell} = \{ a_{1}, a_{2}, \dots, a_{n} \} \subseteq \R $ with $ a_{1} < a_{2} < \dots < a_{n} $. 
Then we have a sequence related to the all hyperplanes in $ \I \setminus \I_{\ell-1} $: 
\begin{align*}
[x_{\ell-1} - x_{\ell} < 0] \subseteq [x_{\ell-2} - x_{\ell} < 0] \subseteq \dots \subseteq [x_{2} - x_{\ell} < 0] \\
\subseteq [x_{1} - x_{\ell} < a_{1}z] \subseteq [x_{1} - x_{\ell} < a_{2}z] \subseteq [x_{1} - x_{\ell} < a_{n}z]. 
\end{align*}
The inclusions except for $ [x_{2} - x_{\ell} < 0] \subseteq [x_{1} - x_{\ell} < a_{1}z] $ are obvious. 
To verify the exception,  let $ b \in N_{2} $ be the minimum element. 
By the assumption $ N_{2} \supseteq N_{\ell} $, we have $ b \leq a_{1} $. 
Since every chamber in $ \mathcal{F}(B_{\ell}) $ is in $ \{ x_{1} - x_{2} < bz \} \cap \{ z > 0 \} $, we have 
\begin{align*}
x_{2}-x_{\ell} < 0,\;  x_{1} - x_{2} < bz,  \; z > 0
&\Rightarrow x_{1} - x_{\ell} < bz, \; z > 0 \\
&\Rightarrow x_{1} - x_{\ell} < a_{1}z, 
\end{align*}
which implies $ [x_{2} - x_{\ell} < 0] \subseteq [x_{1} - x_{\ell} < a_{1}z] $. 

From the sequence above, we may say that $ [x_{\ell-1} - x_{\ell} < 0] = \{B_{\ell}\} $ and $ S_{B_{\ell}}(C) $ and $ S_{B_{\ell}}(C^{\prime}) $ are comparable for any chambers $ C, C^{\prime} \in \mathcal{F}(B_{\ell}) $, so are $ C $ and $ C^{\prime} $. 
Thus $ \mathcal{F}(B_{\ell}) $ is linearly ordered. 
Hence $ B_{\ell} $ is a canonical chamber. 
\end{proof}

\begin{cor}
For the Ish arrangement $ \Is{\ell} $, 
let $ B := \left\{ x_{1} < x_{\ell} < \dots < x_{2} \right\} $. 
Then
\begin{align*}
\sum_{C \in \Ch(\Is{\ell})} t^{d(B,C)} = (1+t+t^{2}+ \dots + t^{\ell})^{\ell - 1}. 
\end{align*}
\end{cor}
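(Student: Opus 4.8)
The plan is to deduce the affine identity from the coned (central) version established earlier in the paper, using that the hyperplane at infinity $ \{z=0\} $ is the rank-one bottom of the supersolvable filtration of $ \I:=\c(\Is{\ell}) $. First I would recognise $ \Is{\ell} $ as the $ N $-Ish arrangement with $ N_{j}=\{0,1,\dots,j-1\} $, so that $ |N_{j}|=j $ and $ N_{2}\subseteq N_{3}\subseteq\dots\subseteq N_{\ell} $. Applying the permutation $ w $ of $ \{2,\dots,\ell\} $ given by $ w(k)=\ell+2-k $ reverses these subsets into the decreasing order required by the preceding theorem, and in this labelling $ \min N_{w(k)}=0 $ for every $ k $. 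Hence, after the relabelling $ x_{k}\mapsto x_{w(k)} $, that theorem shows the chamber
\begin{align*}
B_{\ell}=\Bigl(\bigcap_{j=2}^{\ell}\{x_{1}-x_{j}<0\}\Bigr)\cap\{x_{2}<x_{3}<\dots<x_{\ell}\}\cap\{z>0\}
\end{align*}
is canonical for $ \I $; rewriting in the original coordinates, $ B_{\ell}=\{x_{1}<x_{\ell}<x_{\ell-1}<\dots<x_{2}\}\cap\{z>0\} $, which is exactly the cone (inside $ \{z>0\} $) over the chamber $ B=\{x_{1}<x_{\ell}<\dots<x_{2}\}\in\Ch(\Is{\ell}) $ of the corollary.

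Next I would apply the Bj\"{o}rner--Edelman--Ziegler wall-crossing formula to $ \I $ with the canonical base chamber $ B_{\ell} $. By Corollary \ref{maincor} we have $ \exp\I=(0,1,\underbrace{\ell,\dots,\ell}_{\ell-1}) $, and the supersolvable filtration $ \I=\I_{\ell}\supseteq\dots\supseteq\I_{1} $ constructed in the proof of Theorem \ref{maintheoA} (for the relabelled arrangement) has bottom term $ \I_{1}=\{\{z=0\}\} $; thus in the product formula the factor $ 1+t $ is contributed by the step $ \I_{1} $ and each of the other $ \ell-1 $ steps contributes $ 1+t+\dots+t^{\ell} $. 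Since the wall-crossing formula is proved fiberwise — for every $ C^{\prime}\in\Ch(\I_{1}) $ one has $ \sum_{\pi(C)=C^{\prime}}t^{d(B_{\ell},C)}=t^{d(\pi(B_{\ell}),C^{\prime})}\prod_{i=2}^{\ell}(1+t+\dots+t^{\ell}) $, where $ \pi\colon\Ch(\I)\to\Ch(\I_{1}) $ is the composite projection — and $ \pi(B_{\ell})=\{z>0\} $, summing only over the fiber lying above $ \{z>0\} $ gives
\begin{align*}
\sum_{\substack{C\in\Ch(\I)\\ C\subseteq\{z>0\}}}t^{d(B_{\ell},C)}=(1+t+\dots+t^{\ell})^{\ell-1}.
\end{align*}
Finally, the standard deconing bijection $ \bar{C}\mapsto\{(x,z)\mid z>0,\ x/z\in\bar{C}\} $ identifies $ \Ch(\Is{\ell}) $ with $ \{C\in\Ch(\I)\mid C\subseteq\{z>0\}\} $ and preserves distances relative to the base chambers $ B $ and $ B_{\ell} $: the hyperplane $ \{z=0\} $ separates no two chambers contained in $ \{z>0\} $, while a coned hyperplane $ \{f=cz\} $ separates two such chambers precisely when the affine hyperplane $ \{f=c\} $ separates the corresponding chambers of $ \Is{\ell} $. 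Combining this with the last display yields $ \sum_{C\in\Ch(\Is{\ell})}t^{d(B,C)}=(1+t+\dots+t^{\ell})^{\ell-1} $, as claimed.

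The part needing the most care is the passage from the cone to the affine arrangement, i.e. the removal of the factor $ 1+t $: one has to use both that the wall-crossing formula holds fiberwise and that $ \{z=0\} $ is the rank-one bottom of the filtration, because it is exactly the fiber of $ \Ch(\I)\to\Ch(\I_{1}) $ over $ \{z>0\} $ that matches $ \Ch(\Is{\ell}) $. A purely formal attempt — pairing each chamber $ C $ of $ \I $ with its antipode $ -C $, using $ d(B_{\ell},-C)=|\I|-d(B_{\ell},C) $ — does not suffice on its own, since it would first require knowing that $ \sum_{\bar{C}\in\Ch(\Is{\ell})}t^{d(B,\bar{C})} $ is palindromic of degree $ |\Is{\ell}|=\ell(\ell-1) $, which is not available a priori.
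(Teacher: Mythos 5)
Your argument is correct and is essentially the deduction the paper intends (the corollary is stated without proof): you correctly identify $B$ as the $\{z>0\}$ trace of the canonical chamber $B_{\ell}$ of Theorem 3.2 after reversing the indices $2,\dots,\ell$ so that the $N_{j}$ become decreasing with $\min N_{j}=0$, and you correctly isolate the real subtlety, namely that stripping the factor $1+t$ requires the fiberwise form of the wall-crossing formula over $\I_{1}=\{\{z=0\}\}$ rather than the antipodal pairing. The only caveat is that the fiberwise identity is the content of Bj\"orner--Edelman--Ziegler's inductive proof rather than of the theorem as quoted in the paper, but invoking it is standard and your use of it is sound.
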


\section{A basis for $D(\I)$}

In this section, we will prove Theorem \ref{maintheoB}.
First, we verify that $\theta_{0},\theta_{1},\ldots ,\theta_{\ell}$
belong to $D(\I)$.

\begin{lemma}
\label{lemoftheoB}
Let $ N = (N_{2}, N_{3}, \dots, N_{j}) $ with $ N_{2} \subseteq N_{3} \subseteq \dots \subseteq N_{j} $. 
Then
\begin{align*}
\theta_{0} &= \sum_{i=1}^{\ell} \p{x_{i}}, \qquad
\theta_{1} = 
\left(
\sum_{i=1}^{\ell}x_{i}\frac{\partial}{\partial x_{i}}
\right)
+ z\frac{\partial}{\partial z},   \\
\theta_{k} &= \sum_{s=2}^{k}
\left(
\prod_{a \in N_{k}} (x_{1}-x_{s}-az)\prod_{t=k+1}^{\ell}(x_{s}-x_{t})
\right)
\frac{\partial}{\partial x_{s}}
\ \ (2 \leq k \leq \ell)
\end{align*}
belong to $D(\I)$.
\end{lemma}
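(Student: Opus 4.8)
The plan is to verify directly that each of the proposed derivations sends every defining linear form of $\I$ into the ideal generated by that form, using the equivalent description $D(\A)=\{\theta \in \Der(S) \mid \theta(\alpha_H) \in \alpha_H S \text{ for all } H \in \A\}$. The hyperplanes of $\I$ come in three families: $\{z=0\}$, the braid-type hyperplanes $\{x_i - x_j = 0\}$ for $2 \le i < j \le \ell$, and the Ish-type hyperplanes $\{x_1 - x_j - az = 0\}$ for $2 \le j \le \ell$, $a \in N_j$. So for each candidate $\theta \in \{\theta_0,\theta_1,\theta_k\}$ I would run through these three families.

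For $\theta_0 = \sum_{i=1}^{\ell} \partial/\partial x_i$: it kills $z$, kills each $x_i - x_j$, and kills each $x_1 - x_j - az$ (the coefficients of $x_1$ and $x_j$ cancel), so $\theta_0 \in D(\I)$ trivially. For $\theta_1 = \sum_i x_i \partial/\partial x_i + z\,\partial/\partial z$, which is the Euler derivation: it multiplies every homogeneous linear form by its degree $1$, so $\theta_1(\alpha_H) = \alpha_H$ for every $H$, hence $\theta_1 \in D(\I)$. These two are immediate. The real work is $\theta_k$ for $2 \le k \le \ell$. Write $f_s := \prod_{a \in N_k}(x_1 - x_s - az)\prod_{t=k+1}^{\ell}(x_s - x_t)$, so $\theta_k = \sum_{s=2}^{k} f_s\,\partial/\partial x_s$. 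Since $\theta_k$ has no $\partial/\partial z$-component and does not move $x_1$ or any $x_t$ with $t > k$, we have $\theta_k(z) = 0$, and $\theta_k(x_1 - x_j - az) = 0$ whenever $j > k$; similarly $\theta_k(x_i - x_j) = 0$ when $i > k$. So the cases that remain are: (i) $\{x_i - x_j = 0\}$ with $2 \le i < j \le k$, where $\theta_k(x_i - x_j) = f_i - f_j$ and I must show $(x_i - x_j) \mid (f_i - f_j)$; (ii) $\{x_i - x_j = 0\}$ with $2 \le i \le k < j \le \ell$, where $\theta_k(x_i - x_j) = f_i$ and I must show $(x_i - x_j) \mid f_i$; and (iii) $\{x_1 - x_j - az = 0\}$ with $2 \le j \le k$ and $a \in N_k$, where $\theta_k(x_1 - x_j - az) = -f_j$ and I must show $(x_1 - x_j - az) \mid f_j$.

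Cases (ii) and (iii) are easy divisibility checks: in (ii), $f_i$ carries the factor $\prod_{t=k+1}^{\ell}(x_i - x_t)$, one of whose factors is $x_i - x_j$ since $k < j \le \ell$; in (iii), $f_j$ carries the factor $\prod_{a' \in N_k}(x_1 - x_j - a'z)$, one of whose factors is $x_1 - x_j - az$ because $a \in N_k$ — here I use the hypothesis $N_k \supseteq N_j$ only implicitly, since we only need $a \in N_k$, which is exactly the range of the product defining $f_j$. The main obstacle is case (i): showing $(x_i - x_j) \mid (f_i - f_j)$. The clean way is to observe that $f_i$ and $f_j$ are obtained from one another by swapping $x_i \leftrightarrow x_j$ — more precisely, $f_s$ is a polynomial $g(x_s; x_1, z, x_{k+1}, \dots, x_\ell)$ in which the "other" variables $x_2, \dots, x_k$ (other than $x_s$) do not appear at all, so $f_i = g(x_i; \dots)$ and $f_j = g(x_j; \dots)$ with the same $g$. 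Then $f_i - f_j = g(x_i) - g(x_j)$ is divisible by $x_i - x_j$ by the factor theorem. I would state this uniformity of the formula for $f_s$ explicitly and conclude. Assembling the three derivations, this shows $\theta_0, \theta_1, \theta_k \in D(\I)$ for all $k$, completing the proof; note that although the lemma is stated for $N = (N_2, \dots, N_j)$, the same computation applies verbatim with $j = \ell$, which is the case needed in Theorem \ref{maintheoB}.
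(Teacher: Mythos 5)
Your proposal is correct and follows essentially the same route as the paper: a direct verification that each derivation sends every defining form ($z$, the braid forms $x_i-x_j$, and the Ish forms $x_1-x_j-az$) into the ideal it generates, with the same case split on the position of the indices relative to $k$; your factor-theorem argument for $(x_i-x_j)\mid(f_i-f_j)$ is just a repackaging of the paper's computation modulo $x_i-x_j$, and you correctly identify that the containment $N_j\subseteq N_k$ for $j\le k$ is exactly what makes the Ish-form case go through.
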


\begin{proof}
Since $\theta_{0}(\alpha_{H}) =0$ for any $H \in \I$,
it belongs to $D(\I)$.
The Euler derivation $\theta_{1}$ belongs to 
$D(\A)$ for any central arrangement $\A$, thus $\theta_{1} \in D(\I)$.
We will show that $\theta_{k} \in D(\I)$ for
$2 \leq k \leq \ell$.
It is obvious that $\theta_{k}(z) = 0 \in zS$.

Let $2 \leq i<j \leq \ell$.

Case 1.
If $i<j \leq k$, then
\begin{align*}
\theta_{k}(x_{i}-x_{j})
&=\left(
\prod_{a \in N_{k}} (x_{1}-x_{i}-az)\prod_{t=k+1}^{\ell}(x_{i}-x_{t})
\right) \\
&\hspace{15ex}
-\left(
\prod_{a \in N_{k}} (x_{1}-x_{j}-az)\prod_{t=k+1}^{\ell}(x_{j}-x_{t})
\right) \\
&\equiv \left(
\prod_{a \in N_{k}} (x_{1}-x_{i}-az)\prod_{t=k+1}^{\ell}(x_{i}-x_{t})
\right) \\
&\hspace{15ex}
-\left(
\prod_{a \in N_{k}} (x_{1}-x_{i}-az)\prod_{t=k+1}^{\ell}(x_{i}-x_{t})
\right) \\
&\hspace{45ex}
(\mathrm{mod}\ x_{i}-x_{j}) \\
&=0,
\end{align*}
thus $\theta_{k}(x_{i}-x_{j}) \in (x_{i}-x_{j})S$.

\vspace{1em}
Case 2.
If $i \leq k < j$, then
\[
\theta_{k}(x_{i}-x_{j})=
\prod_{a \in N_{k}} (x_{1}-x_{i}-az)\prod_{t=k+1}^{\ell}(x_{i}-x_{t})
\in (x_{i}-x_{j}) S.
\]

Case 3.
If $k<i< j$, then
\[
\theta_{k}(x_{i}-x_{j})=0 \in (x_{i}-x_{j}) S.
\]
Hence $\theta_{k}(x_{i}-x_{j}) \in (x_{i}-x_{j}) S$ for
$2 \leq i<j \leq \ell$.

Let $2 \leq j \leq \ell$ and $b \in N_{j}$.

\vspace{1em}
Case 1.
If $j \leq k$, then $b \in N_{j} \subseteq N_{k}$, thus
\begin{align*}
\theta_{k}(x_{1}-x_{j}-bz) &=
\prod_{a \in N_{k}} (x_{1}-x_{j}-az)\prod_{t=k+1}^{\ell}(x_{j}-x_{t}) \\
&\in (x_{1}-x_{j}-bz)S.
\end{align*}

Case 2.
If $k < j$, then
\[
\theta_{k}(x_{1}-x_{j}-bz) = 0
\in (x_{1}-x_{j}-bz)S.
\]
Hence $\theta_{k}(x_{1}-x_{j}-bz) \in (x_{1}-x_{j}-bz) S$ for
$2 \leq j \leq \ell$ and $b \in N_{j}$.
Therefore we obtain that $\theta_{k} \in D(\I)$.
\end{proof}

\noindent
\textbf{Proof of Theorem \ref{maintheoB}.}

First, note that if $s=1,$ $k \geq 2$ then 
\[\theta_{k}(x_{s}) = \theta_{k}(x_{1}) =0,\]
and if $2 \leq k<s$ then
\[\theta_{k}(x_{s}) = 0.\]
Thus the determinant of the coefficient matrix of
$\theta_{0},\theta_{1},\ldots ,\theta_{\ell}$
can be calculated as follows:

\begin{align*}
\left|
\begin{array}{ccccc}
\theta_{0}(x_{1}) & \theta_{1}(x_{1}) & \cdots & \theta_{\ell}(x_{1}) \\
\vdots & \vdots & \cdots & \vdots \\
\theta_{0}(x_{\ell}) & \theta_{1}(x_{\ell}) & \cdots & \theta_{\ell}(x_{\ell}) \\
\theta_{0}(z) & \theta_{1}(z) & \cdots & \theta_{\ell}(z) \\
\end{array}
\right| 
&=
\left|
\begin{array}{ccccc}
1 & x_{1} & 0 & \cdots & 0 \\
1 & x_{2} & \theta_{2}(x_{2}) & \cdots & \theta_{\ell}(x_{2}) \\
\vdots & \vdots & 0 & \ddots & \vdots \\
1 & x_{\ell} & \vdots & \ddots & \theta_{\ell}(x_{\ell}) \\
0 & z & 0 & \cdots & 0
\end{array}
\right| \\
&\doteq z
\left|
\begin{array}{ccccc}
1 & 0 & 0 & \cdots & 0 \\
1 & \theta_{2}(x_{2}) & \theta_{3}(x_{2}) & \cdots & \theta_{\ell}(x_{2}) \\
1 & & \theta_{3}(x_{3}) & \cdots & \theta_{\ell}(x_{3}) \\
\vdots & & & \ddots & \vdots \\
1 & \multicolumn{2}{c}{\raisebox{1.5ex}[0pt]{\BigZero}} & & \theta_{\ell}(x_{\ell})
\end{array}
\right|
\end{align*}
\begin{align*}
&= z \prod_{k=2}^{\ell} \theta_{k}(x_{k}) \\
&=z \prod_{k=2}^{\ell}
\left(
\prod_{a \in N_{k}} (x_{1}-x_{k}-az)
\prod_{t=k+1}^{\ell}(x_{k}-x_{t}) 
\right) \\
&=z
\left(
\prod_{k=2}^{\ell} \prod_{a \in N_{k}} (x_{1}-x_{k}-az)
\right)
\left(
\prod_{k=2}^{\ell} \prod_{t=k+1}^{\ell}(x_{k}-x_{t})
\right) \\
&=z
\left(
\prod_{k=2}^{\ell} \prod_{a \in N_{k}} (x_{1}-x_{k}-az)
\right)
\left(
\prod_{2 \leq k < t \leq \ell}(x_{k}-x_{t})
\right) \\
&=Q(\I),
\end{align*}
where $\doteq$ denotes that they are
equal up to a nonzero constant multiple. 
Combining this calculation and Lemma \ref{lemoftheoB},
we can apply Saito's criterion \cite{stheory} and
see that $\theta_{0},\theta_{1},\ldots ,\theta_{\ell}$
form a basis for $D(\I)$. 

\section{Freeness of the deleted Ish arrangements}
Let $ K_{\ell} $ be the complete graph on $ \ell $ vertices. 
We can regard $ K_{\ell} $ as the set of directed edges $(i,j)\ (i<j)$, 
namely $ K_{\ell} = \left\{ (i,j) \mid 1 \leq i < j \leq \ell \right\} $. 
For a subgraph $ G \subseteq K_{\ell} $, Armstrong and Rhoades \cite{arish} defined the deleted arrangements $ \Sh{G} $ and $ \Is{G} $ and showed that they share many properties. 
In particular, it was proven that $ \Sh{G} $ and $ \Is{G} $ have the same characteristic polynomials by their explicit expressions. 
The deleted Shi and Ish arrangements are defined by
\begin{align*}
\Sh{G} := \Co{\ell} \cup \left\{ \{x_{i} - x_{j} = 1\} \mid (i,j) \in G \right\} \subseteq \Sh{\ell}, \\
\Is{G} := \Co{\ell} \cup \left\{ \{x_{1}-x_{j} = i \} \mid (i,j) \in G \right\} \subseteq \Is{\ell}. 
\end{align*}
Athanasiadis gave a necessary and sufficient condition for the freeness of $ \c(\Sh{G}) $. 

\begin{theorem}[{\cite[Theorem 4.1]{afree}} ]
\label{Athanasiadis}
Let $ G \subseteq K_{\ell} $ be a subgraph. 
The cone over the deleted Shi arrangement $ \c(\Sh{G}) $ is free 
if and only if 
there exists a permutation $ w $ of $ \{1, \dots, \ell\} $ such that 
$ w^{-1}G $ is contained in $ K_{\ell} $, i.e., 
$ (i,j) \in w^{-1}G $ implies $ i < j $, 
and has the following property: 
\begin{align*}
\text{ If } 1 \leq i < j < k \leq \ell \text{ and } (i,j) \in w^{-1}G \text{ then } (i,k) \in w^{-1}G. 
\end{align*} 
\end{theorem}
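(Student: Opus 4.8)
The plan is to argue directly, in close parallel with the proof of Theorem~\ref{maintheoA}. For the implication ``$\Leftarrow$'', suppose that $w^{-1}G$ is contained in $K_{\ell}$ and has the stated transitivity property; relabelling the coordinates by $w$ we may assume $G$ itself does. Then, setting $m_{i}:=\min\bigl(\{\,j:(i,j)\in G\,\}\cup\{\ell+1\}\bigr)$, we have $G=\{\,(i,j):i<j,\ j\ge m_{i}\,\}$ and
\[
Q\bigl(\c(\Sh{G})\bigr)=z\Bigl(\prod_{1\le i<j\le\ell}(x_{i}-x_{j})\Bigr)\Bigl(\prod_{i=1}^{\ell}\prod_{j=m_{i}}^{\ell}(x_{i}-x_{j}-z)\Bigr).
\]
I would then write down an explicit homogeneous basis of $D(\c(\Sh{G}))$ and verify it by Saito's criterion \cite{stheory}, exactly as was done for $\I_{N}$ in Theorem~\ref{maintheoB}: take $\theta_{0}=\sum_{i}\frac{\partial}{\partial x_{i}}$, the Euler derivation $\theta_{1}$, and for $2\le k\le\ell$ a derivation $\theta_{k}=\sum_{s}f_{k,s}\frac{\partial}{\partial x_{s}}$ whose coefficient $f_{k,s}$ is a product of ``deleted'' factors $x_{s}-x_{t}-z$ and surviving Coxeter factors $x_{s}-x_{t}$, chosen so that (i) $f_{k,i}-f_{k,j}$ is divisible by $x_{i}-x_{j}$ for all $i<j$ and by $(x_{i}-x_{j})(x_{i}-x_{j}-z)$ whenever $(i,j)\in G$---a case check in the spirit of Lemma~\ref{lemoftheoB}, where the threshold shape $j\ge m_{i}$ forces the incompatible cases to vanish---and (ii) the matrix $(\theta_{a}(x_{b}))$ is triangular with determinant equal to $Q(\c(\Sh{G}))$ up to a nonzero scalar. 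An alternative is to add the deleted hyperplanes to $\c(\Co{\ell})$ one at a time in a suitable order (roughly by decreasing $m_{i}$) and to check $\exp\A''\subset\exp\A'$ at each step, obtaining (inductive) freeness via the Addition Theorem \cite{tarrangementsI,tarrangementsII}.

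For ``$\Rightarrow$'' I would argue by contraposition: if no permutation $w$ makes $w^{-1}G$ a transitively closed subgraph of $K_{\ell}$, then $\c(\Sh{G})$ is not free. The point is that this failure is already visible on three of the coordinates, just as in the $\ell=3$ base case and the reduction step of the $(4)\Rightarrow(1)$ part of the proof of Theorem~\ref{maintheoA}. Concretely, from the combinatorial obstruction one produces a flat $X\in L(\c(\Sh{G}))$---cut out by some of the hyperplanes $z=0$, $x_{i}-x_{j}=0$, $x_{i}-x_{j}-z=0$ involving three coordinates---such that the localization $\c(\Sh{G})_{X}=\{\,H\in\c(\Sh{G})\mid H\supseteq X\,\}$ is, after an affine change of coordinates, an essential arrangement of rank $3$. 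One then picks a hyperplane $H\in\c(\Sh{G})_{X}$, reads off $\exp\bigl((\c(\Sh{G})_{X})'\bigr)=(1,a,b)$ and $\exp\bigl((\c(\Sh{G})_{X})''\bigr)=(1,c)$ from the small, completely explicit combinatorics of rank-$3$ Shi-type arrangements, and checks that the obstruction forces $c\notin\{a,b\}$. By Lemma~\ref{lemmaVAD3}, $\c(\Sh{G})_{X}$ is then not free; and since a localization of a free arrangement is free \cite{otarrangements}, neither is $\c(\Sh{G})$. (Theorem~\ref{factorization} combined with the known factorization of $\chi(\Sh{G},t)$ disposes of many $G$ more cheaply, but not all, so the localization argument is what actually closes this direction.)

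The main obstacle is the necessity direction, and inside it the purely combinatorial step: since the condition on $G$ is quantified over \emph{all} permutations $w$, one must show its failure is witnessed by an explicit small forbidden subconfiguration, identify the corresponding flat $X$, and compute $\exp\bigl((\c(\Sh{G})_{X})''\bigr)=(1,c)$ precisely enough to apply Lemma~\ref{lemmaVAD3}; the sufficiency direction, by contrast, should be a routine adaptation of the Saito-criterion computation already carried out for $\I_{N}$. A secondary nuisance is orientation bookkeeping: a permutation can send $x_{i}-x_{j}-z$ to $x_{j}-x_{i}-z$, which is genuinely a different hyperplane, so one must track edge orientations throughout the coordinate changes in the localization argument.
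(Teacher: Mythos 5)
First, a structural point: the paper does not prove this statement at all --- it is quoted as an external result of Athanasiadis \cite[Theorem 4.1]{afree} and used as a black box in Section 5 --- so there is no internal proof to compare against, and your sketch must stand on its own. As it stands it has a genuine gap in each direction, the serious one being in necessity. Your plan there is to witness the failure of the combinatorial condition by a forbidden configuration ``on three of the coordinates,'' localize at the corresponding rank-$3$ flat as in the $(4)\Rightarrow(1)$ step of Theorem~\ref{maintheoA}, and apply Lemma~\ref{lemmaVAD3}. But the condition on $G$ is \emph{not} determined by its restrictions to three-element vertex sets. Take $\ell=4$ and $G=\{(1,3),(2,4)\}$: in any admissible order a vertex with a unique out-neighbour forces that neighbour into the last position, so both $3$ and $4$ would have to occupy position $4$; hence no valid $w$ exists, and indeed $\chi(\Sh{G},t)=t(t-3)(t^{2}-5t+7)$ does not factor over $\Z$, so $\c(\Sh{G})$ is not free. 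Yet every three-element vertex subset induces at most one edge of $G$, and a single-edge graph on three vertices satisfies the condition; consequently every localization of the shape your reduction produces (at flats $\{z=x_{i}-x_{j}=x_{i}-x_{k}=0\}$) is free, and your argument detects nothing. A non-free rank-$3$ localization does exist for this $G$ --- at the ``crossing'' flat $\{x_{1}=x_{2},\ x_{3}=x_{4},\ x_{1}-x_{3}=z\}$ the localization is $\{x_{1}-x_{2},\ x_{3}-x_{4},\ x_{1}-x_{3}-z,\ x_{2}-x_{4}-z\}$, a generic arrangement of four hyperplanes of rank $3$, which is not free --- but it involves all four coordinates and uses a deleted hyperplane among its defining equations. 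Classifying the obstructions for a general $G$ with no valid $w$ and exhibiting a non-free localization for each is precisely the content of the necessity direction, and it is exactly what your sketch defers.

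The sufficiency direction also does not go through ``exactly as was done for $\I_{N}$.'' The triangular product-form basis of Theorem~\ref{maintheoB} exists because every non-Coxeter hyperplane of $\I_{N}$ involves $x_{1}$: since $\theta_{k}(x_{1})=0$, each such hyperplane imposes only a divisibility condition $(x_{1}-x_{j}-az)\mid f_{k,j}$ on a single coefficient. For a Shi-type arrangement the hyperplane $x_{i}-x_{j}=z$ with $i,j\geq 2$ couples two coefficients that may both be nonzero, and triangularity plus product coefficients is already impossible for $G=K_{3}$: a triangular basis of $\c(\Sh{3})$ (exponents $(0,1,3,3)$) would contain a degree-$3$ derivation with a single nonzero $x$-coefficient divisible by four pairwise non-proportional linear forms. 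This is consistent with the fact that the known bases for $\c(\Sh{\ell})$ are not products of linear forms. Your ``alternative'' route --- adding the deleted hyperplanes one at a time and invoking the Addition Theorem \cite{tarrangementsI,tarrangementsII} --- is the one that works and is essentially how sufficiency is actually proved, but as written you have not specified the order of addition, the intermediate exponents, or the restricted arrangements, so that direction too remains an outline rather than a proof.
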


In this section, we will prove that the property of $ G $ in the Theorem \ref{Athanasiadis} is also a necessary and sufficient condition for the freeness of $ \c(\Is{G}) $ by making use of the terminology of the $ N $-Ish arrangements.
The problem of whether the cone of the deleted Ish arrangement $\c(\Is{G})$
is free or not is posed by Armstrong and Rhoades in 
\cite[p.\hspace{0.3ex}1517]{arish}
together with the problem for $\c(\Is{\ell})$.

For a subgraph $ G \subseteq K_{\ell} $, define a tuple of sets $ N_{G} =(N_{2}, \dots, N_{\ell}) $ by
\begin{align*}
N_{j} := \{ 0 \} \cup \left\{ i \mid (i,j) \in G \right\} \subseteq \{0, 1, \dots, j-1\}. 
\end{align*}
It is easy to show that $ \Is{N_{G}} = \Is{G} $. 

\begin{theorem}
\label{theoC}
Let $ G \subseteq K_{\ell} $ be a subgraph. 
Then the following are equivalent:
\begin{enumerate}
\item $ \c(\Is{G}) $ is free. 
\item $ N_{G} $ is a nest. 
\item $ G $ has the property in Theorem \ref{Athanasiadis}. 
\item For any $ j,k \in \{2, \dots, \ell\} $, either of the following two conditions holds:
\begin{enumerate}
\item[(i)] If $ (i,j) \in G $ then $ (i,k) \in G$ for any $ i \leq \min\{j,k\} $.
\item[(ii)] If $ (i,k) \in G $ then $ (i,j) \in G$ for any $ i \leq \min\{j,k\} $.
\end{enumerate}
\end{enumerate}
\end{theorem}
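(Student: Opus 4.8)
The plan is to establish the equivalence of the four conditions by a short cycle of implications, leaning on Theorem \ref{maintheoA} to handle the hard analytic step. Since $\Is{N_G} = \Is{G}$ and hence $\c(\Is{G}) = \I_{N_G}$, Theorem \ref{maintheoA} applied to the tuple $N = N_G$ gives immediately that $\c(\Is{G})$ is free if and only if $N_G$ is a nest; this settles the equivalence $(1) \Leftrightarrow (2)$ with no further work. It therefore remains to prove the purely combinatorial equivalences $(2) \Leftrightarrow (3) \Leftrightarrow (4)$, which are statements about the tuple of sets $N_j = \{0\} \cup \{\, i \mid (i,j) \in G \,\}$ and about the graph $G$ itself, with no arrangement-theoretic content.

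For $(2) \Leftrightarrow (4)$, I would unwind the definition of ``nest''. By definition $N_G$ is a nest iff the sets $N_2, \dots, N_\ell$ are totally ordered by inclusion, i.e.\ for every pair $j,k \in \{2,\dots,\ell\}$ we have $N_j \subseteq N_k$ or $N_k \subseteq N_j$. Now $N_j \subseteq N_k$ means precisely that every $i$ with $(i,j) \in G$ (equivalently $i \in N_j \setminus \{0\}$) also satisfies $(i,k) \in G$; here one must note that $i \in N_j$ forces $i \le j-1 < j$, and $i \in N_k$ forces $i < k$, so the comparison is only constrained by indices $i \le \min\{j,k\}$ — which is exactly the qualifier appearing in condition $(4)$. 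The element $0$ lies in every $N_j$, so it never obstructs an inclusion. Thus $N_j \subseteq N_k$ is literally condition $(4)(\mathrm{i})$ and $N_k \subseteq N_j$ is literally condition $(4)(\mathrm{ii})$, giving the equivalence on the nose; this step is essentially bookkeeping.

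For $(3) \Leftrightarrow (4)$ — which I expect to be the main obstacle, since it is the one genuinely nontrivial combinatorial translation — I would argue as follows. First suppose $(3)$ holds: there is a permutation $w$ of $\{1,\dots,\ell\}$ with $w^{-1}G \subseteq K_\ell$ and the ``downward-closed in the third coordinate'' property. The key observation is that because $G$ and $w^{-1}G$ are both contained in $K_\ell$, the restriction of $w$ to its action on edges forces a strong constraint on $w(1)$: any vertex $j$ with $(i,j)\in G$ for some $i$ must have all of $G$'s edges into it come from vertices $<j$, and I would show that the property propagates so that, after relabeling, the sets $\{\,i \mid (i,j)\in G\,\}$ are nested exactly as required for $(4)$ — in fact $(3)$ is designed so that $w^{-1}G$ looks like a ``staircase'', i.e.\ the neighborhoods into successive vertices are nested, which is condition $(4)$ for $w^{-1}G$; one then checks $(4)$ is invariant, or rather correctly transported, under such relabelings. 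Conversely, if $(4)$ holds then $N_G$ is a nest (by the already-established $(2)\Leftrightarrow(4)$), so there is a permutation $\sigma$ of $\{2,\dots,\ell\}$ with $N_{\sigma(2)} \subseteq \dots \subseteq N_{\sigma(\ell)}$; extending $\sigma$ to fix $1$ and taking $w = \sigma^{-1}$, I would verify directly that $w^{-1}G \subseteq K_\ell$ and that the nesting $N_{\sigma(2)} \subseteq \dots \subseteq N_{\sigma(\ell)}$ is precisely the implication ``$(i,j)\in w^{-1}G$ and $j<k$ imply $(i,k)\in w^{-1}G$'' once one discards the artificial element $0$. The delicate point throughout is keeping track of which index comparisons are real constraints versus vacuous ones (an edge $(i,j)$ only makes sense when $i<j$), and checking that reindexing by $w$ does not break the containment $w^{-1}G \subseteq K_\ell$; I would handle this by always phrasing conditions in terms of the sets $N_j$ and invoking the clean $(2)\Leftrightarrow(4)$ equivalence as a bridge, so that the only thing to verify by hand is the dictionary between the staircase shape of $w^{-1}G$ and a chain of set inclusions.
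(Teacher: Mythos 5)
Your proposal is correct and follows essentially the same route as the paper: the equivalence $(1)\Leftrightarrow(2)$ via Theorem \ref{maintheoA}, the direct dictionary between $N_j\subseteq N_k$ and condition $(4)(\mathrm{i})$, and the translation between the nest order and the ``staircase'' property of $w^{-1}G$ (the paper runs the cycle $(2)\Rightarrow(3)\Rightarrow(4)\Rightarrow(2)$, which is the same content as your $(2)\Leftrightarrow(4)$ plus $(3)\Leftrightarrow(4)$). Two small points to fix when writing it up: the permutation realizing $(3)$ should be $w=\sigma$ itself rather than $\sigma^{-1}$, and the verification that $w^{-1}G\subseteq K_\ell$ needs the observation that $w(i)\in N_{w(j)}$ while $w(i)\notin N_{w(i)}$ forces $N_{w(i)}\subsetneq N_{w(j)}$ and hence $i<j$ --- no separate ``constraint on $w(1)$'' is required.
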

\begin{proof}
$ (1) \Leftrightarrow (2) $ 
It is obvious from Theorem \ref{maintheoA}. 

$ (2) \Rightarrow (3) $ 
Assume that $ N_{G} $ is a nest. 
Then there exists a permutation $ w $ of $ \{1,\dots,\ell\} $ with $ w(1)=1 $ such that
\begin{align*}
N_{w(2)} \subseteq N_{w(3)} \subseteq \dots \subseteq N_{w(\ell)}. 
\end{align*}

Now, we will prove that $ w^{-1}G \subseteq K_{\ell} $
i.e., $(i,j) \in w^{-1}G$ implies $i<j$. 
For any $ (i,j) \in w^{-1}G $, we have $ (w(i), w(j)) \in G $. 
Hence $ w(i) \in N_{w(j)} $. 
Then $ N_{w(j)} \not\subseteq N_{w(i)} $ since $ w(i) \not\in N_{w(i)} $. 
Since $ N_{G} $ is a nest, we have $ N_{w(i)} \subseteq N_{w(j)} $. 
Therefore $ i < j $, namely $ (i,j) \in K_{\ell} $. 
Thus we have showed that $ w^{-1}G \subseteq K_{\ell} $. 

Suppose that $ 1 \leq i < j < k \leq \ell $. 
Then we have a chain of implications: 
\begin{align*}
(i,j) \in w^{-1}G 
&\Rightarrow (w(i), w(j)) \in G 
\Rightarrow w(i) \in N_{w(j)} \\
&\Rightarrow w(i) \in N_{w(k)}
\Rightarrow (w(i), w(k)) \in G
\Rightarrow (i,k) \in w^{-1}G. 
\end{align*}
This proves that $ G $ satisfies the second condition. 

$ (3) \Rightarrow (4) $
Fix elements $ j,k \in \{2, \dots, \ell\} $ and assume that $ w^{-1}(j) < w^{-1}(k) $. 
For any $ (i,j) \in G $, we have that $ (w^{-1}(i), w^{-1}(j)) \in w^{-1}G $. 
Since $ w^{-1}G \subseteq K_{\ell} $, we have $ w^{-1}(i) < w^{-1}(j) $. 
Then the second property of (3) implies $ (w^{-1}(i), w^{-1}(k)) \in w^{-1}G $, i.e., $ (i,k) \in G $. 
Hence (i) holds. 
Similarly, if $ w^{-1}(j) > w^{-1}(k) $ then (ii) holds. 

$ (4) \Rightarrow (2) $
For any $ j,k \in \{2, \dots, \ell\} $, it is clear that (i) holds if and only if $ N_{j} \subseteq N_{k} $ and (ii) holds if and only if $ N_{k} \subseteq N_{j} $. 
Therefore every element in $ N_{G} $ is comparable. 
Thus $ N_{G} $ is a nest. 
\end{proof}

Combining Theorem \ref{Athanasiadis} and Theorem \ref{theoC}, we can prove that the following corollary: 

\begin{cor}
The deleted arrangements $ \Sh{G} $ and $ \Is{G} $ share the freeness. 
\end{cor}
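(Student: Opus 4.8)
The plan is to read off the claim directly from the two theorems it invokes. By Theorem~\ref{Athanasiadis}, the cone $\c(\Sh{G})$ is free if and only if $G$ has the combinatorial property stated there (the existence of a permutation $w$ with $w^{-1}G \subseteq K_{\ell}$ satisfying the ``upward closure'' condition). On the other hand, Theorem~\ref{theoC} shows that $\c(\Is{G})$ is free if and only if $G$ has exactly that same property (condition (3) in Theorem~\ref{theoC} is literally the property of Theorem~\ref{Athanasiadis}, and it is equivalent to condition (1), namely the freeness of $\c(\Is{G})$). Hence the two freeness statements are governed by a single condition on $G$.

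Concretely, the argument runs as follows. First I would note that, by the equivalence $(1)\Leftrightarrow(3)$ in Theorem~\ref{theoC}, the arrangement $\c(\Is{G})$ is free precisely when $G$ satisfies the property appearing in Theorem~\ref{Athanasiadis}. Next I would invoke Theorem~\ref{Athanasiadis} itself, which says that $\c(\Sh{G})$ is free precisely when $G$ satisfies that very same property. Chaining these two ``if and only if'' statements, I conclude that $\c(\Sh{G})$ is free if and only if $\c(\Is{G})$ is free, which is exactly the assertion that the two deleted arrangements share the freeness. No further computation is needed; the corollary is a formal consequence of the characterizations already established.

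There is essentially no obstacle here: the work has all been done in proving Theorem~\ref{theoC}, whose chain of equivalences $(1)\Leftrightarrow(2)\Leftrightarrow(3)\Leftrightarrow(4)$ was the substantive content. The only thing to be careful about is to make the logical bookkeeping explicit — that condition (3) of Theorem~\ref{theoC} is not merely similar to but identical with the hypothesis in Theorem~\ref{Athanasiadis}, so that the two biconditionals can be composed without any gap. One could optionally add a sentence explaining why freeness of the cone is the natural notion to compare (since the affine arrangements $\Sh{G}$ and $\Is{G}$ are not central), but this is already implicit in the setup of Section~5.

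\begin{proof}
By the equivalence $ (1) \Leftrightarrow (3) $ in Theorem \ref{theoC}, the cone $ \c(\Is{G}) $ is free if and only if $ G $ has the property stated in Theorem \ref{Athanasiadis}. By Theorem \ref{Athanasiadis}, the cone $ \c(\Sh{G}) $ is free if and only if $ G $ has that same property. Composing these two equivalences, we conclude that $ \c(\Sh{G}) $ is free if and only if $ \c(\Is{G}) $ is free.
\end{proof}
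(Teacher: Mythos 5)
Your proof is correct and is exactly the argument the paper intends: the corollary is stated as a direct combination of Theorem \ref{Athanasiadis} and the equivalence $(1)\Leftrightarrow(3)$ of Theorem \ref{theoC}, which is precisely how you compose the two biconditionals. No issues.
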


\section*{Acknowledgments}
The authors are grateful to Brendon Rhoades for suggesting a base chamber
for the wall-crossing formula of the original Ish arrangement $\Is{\ell}$.
The proof of Theorem 3.3 is based on his idea.

\bibliographystyle{amsplain1}
\providecommand{\bysame}{\leavevmode\hbox to3em{\hrulefill}\thinspace}
\providecommand{\MR}{\relax\ifhmode\unskip\space\fi MR }
\providecommand{\MRhref}[2]{%
  \href{http://www.ams.org/mathscinet-getitem?mr=#1}{#2}
}
\providecommand{\href}[2]{#2}

\end{document}